\tikzset{> = stealth}
\newtheorem{thm}{Theorem}[section]
\newtheorem{prp}[thm]{Proposition}
\newtheorem{lem}[thm]{Lemma}
\newtheorem{cor}[thm]{Corollary}
\theoremstyle{change}
\theoremstyle{nonumberplain}
\theoremstyle{definition}
\newtheorem{dfn}[thm]{Definition}
\theoremstyle{remark}
\numberwithin{equation}{section}
\begin{document}


\title[Homotopy invariance of stabilized configuration spaces]{An elementary proof of the homotopy invariance of stabilized configuration spaces}


 \author{Connor Malin}
 \address{University of Notre Dame}
 \email{cmalin@nd.edu}

\begin{abstract}
In this paper we give an elementary proof of the proper homotopy invariance of the equivariant stable homotopy type of the configuration space $F(M,k)$ for a topological manifold $M$. Our technique is to compute the Spanier-Whitehead dual of $\Sigma^\infty_+ F(M,k)$ and use the results of Spivak and Wall on normal spherical fibrations to deduce that the Spanier-Whitehead dual is a proper homotopy invariant. This stable invariance was recently proved by Knudsen using factorization homology. Aside from being elementary, our proof has the advantage that it readily extends to ``generalized configuration spaces'' which have recently undergone study.
\end{abstract}
\maketitle



\section{Introduction}

Let $M$ be an $n$-manifold. The configuration space of $k$ points in $M$, $F(M,k)$, is the collection of tuples $(x_1,\dots,x_k)$ with $x_i\neq x_j$ for $i \neq j$. There is an evident free action of $S_k$, the $k^{\operatorname{th}}$ symmetric group, on this space. The quotient $F(M,k)/S_k$ is the unordered configuration space of $k$ points in $M$. Because this action is free, if one is interested in the $S_k$-action, it suffices to study the Borel equivariant homotopy type, i.e. two $S_k$-spaces or $S_k$-spectra are Borel equivalent if there is an equivariant map that is an underlying weak homotopy equivalence of spaces or spectra. 

Configuration spaces are classic tools and objects of study in algebraic and geometric topology. Despite being simple to define, even the homology of configuration spaces is rather mysterious. For example, Bodigheimer-Cohen-Taylor show the $\mathbb{F}_p$ homology of $F(M,k)/S_k$, $M$ smooth and compact, depends only on the $\mathbb{F}_p$ homology of $M$ if $n=2k+1$ \cite[Theorem C]{BCT}, which is contrary to the even dimensional case where Zhang has shown that it depends on the cup product structure \cite[Corollary 6.9]{zhang}.

The problem of understanding the homotopy types of configuration spaces is ongoing; one might conjecture that the homotopy types of configuration spaces themselves are homotopy invariants of the $n$-manifold $M$ when we restrict to a fixed dimension $n$. This is known to be false in general. Longoni and Salvatore show that $3$-dimensional lens spaces give counterexamples \cite[Section 5]{LS}. Alternatively, standard arguments with Fadell-Neuwirth fibrations show that for a contractible $n$-manifold $M$, the homotopy type of $F(M,k)$ is $F(\mathbb{R}^n,k)$. From these examples, we already see the dependence of $F(M,k)$ on the homotopy type of $M$ is subtle. The question of the precise dependence is highly related to embedding calculus, which seeks to study manifolds via operad actions on their configuration spaces.

From now on, we assume that $M$ is a tame, topological manifold; i.e. $M$ is homeomorphic to the interior of a compact manifold with, possibly empty, boundary.

There is an easy observation one can make about homology of configuration spaces. For a pointed space $X$, let $\Delta^{\mathrm{fat}}(X^{\wedge k})$ denote the subspace of $X^{\wedge k}$ of tuples where some $x_i=x_j$ for $i \neq j$. The one point compactification $F(M,k)^+$ is homeomorphic to $(M^+)^{\wedge k} /\Delta^{\mathrm{fat}}((M^+)^{\wedge k})$, so Poincaré  duality tells us that, for oriented $M$, the homology of $F(M,k)_+$ is the cohomology of $(M^+)^{\wedge k}/\Delta^{\mathrm{fat}}((M^+)^{\wedge k})$, up to a shift of $nk$. However, the latter is evidently a proper homotopy invariant, and so the homology of $F(M,k)$ is a proper homotopy invariant.

Since homology is stable with respect to suspension, one might conjecture that the spectrum $\Sigma^\infty_+ F(M,k)$ is a proper homotopy invariant. Indeed, Aouina-Klein gave explicit bounds on the number of suspensions needed for configurations spaces of homotopy equivalent compact PL manifolds to become (nonequivariantly) equivalent \cite[Theorem A]{ak}. Recently, Knudsen proved an equivariant version of this result for all tame manifolds \cite[Theorem C]{knudsen}, implying that additionally the stable homotopy type of unordered configuration spaces is a homotopy invariant of $M^+$. Cohen-Taylor also have an unpublished sketch in the smooth case using the theory of labeled configuration spaces.

The proof techniques used in these approaches have very few commonalities. The first requires some nontrivial geometric topology and extended use of Poincaré  duality spaces, the second requires factorization homology and the theory of higher enveloping algebras for spectral lie algebras, and the last requires somewhat substantial homotopy theory.

The purpose of this paper is to give an elementary proof of the proper homotopy invariance of the spectrum $\Sigma^\infty_+ F(M,k)$.

\begin{thm}
If $M_1,M_2$ are tame, topological manifolds which are proper homotopy equivalent, then there is a Borel equivariant equivalence \[\Sigma^\infty_+ F(M_1,k) \simeq \Sigma^\infty_+ F(M_2,k).\]
\end{thm}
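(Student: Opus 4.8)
The plan is to leverage the observation, already recorded above, that $F(M,k)^+\cong (M^+)^{\wedge k}/\Delta^{\mathrm{fat}}((M^+)^{\wedge k})$. Since a proper homotopy equivalence $M_1\to M_2$ extends to a based homotopy equivalence $M_1^+\to M_2^+$ (and a proper homotopy extends to a based homotopy of one-point compactifications), and since the $k$-fold smash and the fat-diagonal quotient are natural constructions carrying the $S_k$-action, this formula already shows that $\Sigma^\infty F(M,k)^+$, together with its $S_k$-action, is a Borel-equivariant proper homotopy invariant of $M$. The theorem asks for the analogous statement about $\Sigma^\infty_+ F(M,k)$, and because $M$ is tame the manifold $F(M,k)$ is again tame, so $\Sigma^\infty_+ F(M,k)$ is a finite $S_k$-spectrum and is canonically equivalent to its double Spanier--Whitehead dual; it therefore suffices to prove that $D(\Sigma^\infty_+ F(M,k))$ is a Borel-equivariant proper homotopy invariant.

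To compute $D(\Sigma^\infty_+ F(M,k))$ I would apply Atiyah duality to the open manifold $P:=F(M,k)$. Its tangent bundle is $TP=\bigoplus_{i=1}^k p_i^*TM$, where $p_i\colon P\to M$ is the $i$-th coordinate, so Atiyah duality presents $D(\Sigma^\infty_+ P)$ as the (compactly supported) Thom spectrum of the stable normal bundle $-TP$ over $P$ --- equivalently, of the stable Spivak normal fibration $\nu_P$. Because $P$ is an open $S_k$-invariant submanifold of $M^{\times k}$ with $TP$ the restriction of $TM^{\times k}$, this Spivak normal fibration is the $k$-fold external Whitney sum of the Spivak normal fibration $\nu_M$ of $M$, restricted to $P$; and combining this with the cofibre sequence $\Delta^{\mathrm{fat}}((M^+)^{\wedge k})\to (M^+)^{\wedge k}\to F(M,k)^+$ --- dualized, and with the resulting Thom twists tracked through the comparison between compactly supported and ordinary Thom spectra --- exhibits $D(\Sigma^\infty_+ F(M,k))$ as a natural construction applied to just two pieces of data: the based space $M^+$ and the stable spherical fibration $\nu_M$ over $M$.

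This is where Wall's results enter. The tangent bundle $TM$ itself is not a homotopy invariant, but the stable spherical fibration $\nu_M$ --- the Spivak normal fibration of $M$, regarded as the Poincar\'e pair $(\overline M,\partial\overline M)$ --- is, by Wall's uniqueness theorem for normal spherical fibrations, well defined up to fibre homotopy equivalence and invariant under homotopy equivalences of Poincar\'e pairs; via the end of $M$ (a proper homotopy invariant) this makes $\nu_M$, with the structure needed to form its external sums and Thom spectra, a proper homotopy invariant of $M$, naturally and $S_k$-equivariantly for the trivial action on $M$. Feeding $M^+$ and $\nu_M$ into the construction of the previous paragraph shows that $D(\Sigma^\infty_+ F(M,k))$ is a Borel-equivariant proper homotopy invariant, and hence so is $\Sigma^\infty_+ F(M,k)=D(D(\Sigma^\infty_+ F(M,k)))$.

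I expect the main obstacle to be the middle step: making Atiyah/Spanier--Whitehead duality precise for the non-compact manifold $F(M,k)$, in particular keeping straight the difference between the compactly supported Thom spectrum that computes $D(\Sigma^\infty_+ F(M,k))$ and the ordinary Thom spectrum that computes $D(\Sigma^\infty F(M,k)^+)$, and then organizing the fat-diagonal stratification of $M^{\times k}$ so that the resulting description of $D(\Sigma^\infty_+ F(M,k))$ visibly depends only on $M^+$ and $\nu_M$ --- all while carrying the $S_k$-action, which is free on $F(M,k)$ but not on $M^{\times k}$. The second place to be careful is in extracting from Wall's theorem not merely the fibre-homotopy type of $\nu_M$ but enough functorial structure (a Thom spectrum with its Thom diagonal, compatible with external sums) to actually run the construction; once that is in place, replacing the fat diagonal by the diagonal appropriate to a class of generalized configuration spaces yields the extension advertised in the abstract.
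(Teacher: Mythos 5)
Your overall strategy coincides with the paper's: compute the Spanier--Whitehead dual of $F(M,k)_+$ as a compactly supported Thom spectrum of the $k$-fold external normal data restricted to $F(M,k)$, rewrite that as the Thom space over $(M^+)^{\wedge k}$ modulo its restriction to the fat diagonal, use Wall's uniqueness of normal spherical fibrations to see that this depends only on the proper homotopy type of $M$, and dualize back. However, the two steps you defer as ``places to be careful'' are precisely where the theorem's content lives, and as written they are genuine gaps rather than technicalities. The conclusion is a \emph{Borel $S_k$-equivariant} equivalence, so you need the duality equivalence $D(\Sigma^\infty_+ F(M,k))\simeq \Sigma^{-(n+c)k}\,(\nu^k|_{F(M,k)})^+$ (and, on your route, the double-duality unit) to be given by $S_k$-equivariant maps, not merely underlying equivalences; Atiyah duality for an open manifold gives no such statement off the shelf, and the action on $M^{\times k}$ is not free, so one cannot just quotient. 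The paper's answer to this is its intrinsic pairing: a map $F(M,k)_+\wedge(\nu^k_{\mathrm{conf}})^+\to S^{(n+c)k}$ built from a loose framing of the total space $\nu^k_{\mathrm{conf}}$, rescaled by an $S_k$-invariant radius so that $F^{-1}_{gx}(gy)=gF^{-1}_x(y)$ holds on the nose. Your proposal contains no substitute for this construction, and without it the Borel-equivariant statement does not follow.

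The second gap is the passage from Wall's theorem to a usable map. To transport $(\nu_1^k)^+/(\nu^k_{\Delta^{\mathrm{fat}},1})^+$ across a proper homotopy equivalence $f\colon M_1\to M_2$ you need more than the well-definedness of the fibre homotopy type of $\nu_M$: you need a fiberwise homotopy equivalence $\nu_1^+\sim\nu_2^+$ \emph{covering $f$ itself}, so that its $k$-fold smash preserves the fat diagonal equivariantly. This is what the paper manufactures in its bundle sections (pulling back along $f$, adjusting so the bundle extends over $\bar M_1$, applying Wall's uniqueness over the identity, extending sphere-bundle equivalences to disk bundles by the Alexander trick, and one-point compactifying proper fiberwise homotopies); naming the Spivak fibration does not by itself produce this covering equivalence with the required naturality. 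A smaller point: your double-dual detour rests on the claim that $F(M,k)$ is again tame, which is not obvious (it requires something like the Fulton--MacPherson compactification); the paper's route through the explicit pairing identifies $\Sigma^\infty_+F(M,k)$ directly with the mapping spectrum out of $(\nu^k_{\mathrm{conf}})^+$ and avoids that extra input, needing only the homotopy-finiteness that both arguments implicitly share.
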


In addition to our approach being completely elementary, there is an advantage to our techniques. The proof works for a more general class of spaces than configuration spaces. In the last section, we prove an analogous theorem about variants of configuration spaces that appear in \cite{bm}, \cite{hai}, \cite{petersen}, \cite{vw}.

Our proof is a spectrum level refinement of the observation that the homology of $F(M,k)$ is a proper homotopy invariant. Indeed, if $M$ is a framed, compact manifold, then by Atiyah duality the Spanier-Whitehead dual of $\Sigma^\infty_+F(M,k)$ is $\Sigma^\infty M^k/\Delta^{\mathrm{fat}}(M^k)$, and one is done since the latter is a homotopy invariant. To begin our proof, in section 2, we construct a duality pairing for framed manifolds which will eventually be used to calculate the Spanier-Whitehead dual $\Sigma^\infty_+ F(M,k)^\vee$. In section 3, we review some results about bundles which will be used in the proof of the main theorem. In section 4, we briefly recall what is known about normal bundles of topological manifolds. In section 5, we prove our main theorem by identifying $\Sigma^\infty_+F(M,k)^\vee$ with a quotient of a certain relative Thom space over $(M^+)^{\wedge k}$. In section 6, we generalize the main theorem to other types of configuration spaces.

\textbf{Acknowledgements:} I would like to thank Ben Knudsen and Mark Behrens for reviewing drafts of this paper, as well as the anonymous reviewer for their detailed comments.
\section{An intrinsic duality pairing}

From now on, the manifold $M$ is assumed tame, i.e. abstractly homeomorphic to the interior of a compact topological $n$-manifold with boundary.
\\

In this section, we construct an intrinsic duality pairing for framed manifolds. Because of the tautological nature of the pairing, when there are group actions on $M$ and $S^n$, it will be easy to check when the pairing is equivariant.

For spectra $A,B$, let $F(A,B)$ denote the mapping spectrum. The only properties we need of $F(-,-)$ are that if $A,B$ are finite, then given a map $A \wedge B \rightarrow S^0$ there is a natural identification for any field $k$ of  \begin{center}$H_*((A \rightarrow F(B,S^0));k)$ with  $(H_*(A;k)\rightarrow \mathrm{Hom}(H_*(B;k),k))$
\end{center} 

\begin{dfn}
Let $X,Y,Z$ be pointed spaces. A map $X \wedge Y \rightarrow Z$ is called a duality pairing if both \[\Sigma^\infty X \rightarrow F(\Sigma^\infty Y,\Sigma^\infty Z)\]  \[\Sigma^\infty Y \rightarrow F(\Sigma^\infty X,\Sigma^\infty Z)\] are equivalences.
\end{dfn}

\begin{dfn}
A loose framing of a smooth manifold $M$ is a section $F$ of \[\operatorname{Emb}([-\infty,\infty]^n,M) \xrightarrow{ev_0} M .\] Here, $\operatorname{Emb}(-,-)$ denotes smooth embeddings with the topology granted by the sup metric (for some metric on $M$ inducing the topology). By $F_x$ we denote the embedding $\mathbb{R}^n \rightarrow M$ given by $F(x)|_{\mathbb{R}^n}$.
\end{dfn}

We note that a smooth manifold $M$ is loosely framed, if and only if, $M$ has a framing, i.e. it has a trivialization of its tangent bundle. The forward direction follows from taking derivatives, and the backwards direction follows from taking the induced Riemannian metric and passing to a subbundle where it is injective on fibers.

\begin{thm}
Suppose $M$ is a smooth manifold with a loose framing $F$. Then the following is a duality pairing:

\[M_+ \wedge M^+ \xrightarrow{\chi_{F}} S^n\]
\[(x,y) \rightarrow F^{-1}_x(y)\]

If $y \not \in \operatorname{im}(F_x)$, we interpret $F_x^{-1}(y)$ as $\infty \in S^n$. 
\end{thm}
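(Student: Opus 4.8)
The plan is to reduce the assertion to a perfect‑pairing statement in homology, exploiting finiteness together with the stated compatibility of $F(-,-)$ with cohomology. Since $M$ is tame, write $M=\operatorname{int}(W)$ with $W$ a compact manifold with boundary; then $M\simeq W$ is a finite complex, so $\Sigma^\infty M_+$ is a finite spectrum, and $M^+\cong W/\partial W$ is a finite complex, so $\Sigma^\infty M^+$ is a finite spectrum. For finite spectra a pairing $A\wedge B\to S^n$ is a duality pairing as soon as the induced bilinear map $H_*(A;k)\otimes H_*(B;k)\to H_*(S^n;k)=k[n]$ is perfect for every field $k$: by the property of $F(-,-)$ this is precisely the statement that the adjoint $H_*(B;k)\to H_*(F(A,S^n);k)\cong H^{n-*}(A;k)$ is an isomorphism; a homology isomorphism over all fields between finite spectra is an integral homology isomorphism, and such a map has contractible cofiber, hence is an equivalence; the same argument applies verbatim to the other adjoint. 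Thus it suffices to show that $\mu$ induces a perfect pairing $H_*(M_+;k)\otimes\tilde H_*(M^+;k)\to\tilde H_*(S^n;k)$.

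To identify this pairing, recall that $\tilde H_*(M^+;k)\cong H^{\mathrm{BM}}_*(M;k)$, and that a loose framing orients $M$ (a section of $\operatorname{Emb}(\mathbf{R}^n,M)\to M$ determines, via derivatives at the origin, a trivialization of $TM$ up to homotopy, hence an orientation). Poincaré duality then gives $H^{\mathrm{BM}}_{n-i}(M;k)\cong H^i(M;k)\cong\operatorname{Hom}_k(H_i(M;k),k)$, so it remains to recognize the $\mu$‑pairing as the Poincaré duality pairing. This is where the tautological nature of $\mu$ enters: $\mu$ is the Pontryagin–Thom collapse attached to the open set $E=\{(x,y)\in M\times M:\ y\in\operatorname{im}(F_x)\}$, which is a tubular neighborhood of the diagonal $\Delta\subseteq M\times M$, together with the diffeomorphism $E\cong M\times\mathbf{R}^n$, $(x,v)\mapsto(x,F_x(v))$, which trivializes the normal bundle $\nu_\Delta\cong TM$ compatibly with the framing. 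The effect of such a collapse on homology is the slant product against the Thom class of $\nu_\Delta$; since this Thom class comes from a global trivialization, that slant product is the cap product with the fundamental class, i.e. Poincaré duality, which is perfect.

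The points requiring care — and jointly the main obstacle — are point‑set rather than homotopy‑theoretic. First, when $M$ is noncompact the literal formula for $\mu$ is not continuous on the one‑point compactification, since $\mu$ is constant along the noncompact diagonal while the diagonal accumulates at the basepoint of $M_+\wedge M^+$; one repairs this by taking the framing compatible with a collar of the ends of $W$, so that the tubes $\operatorname{im}(F_x)$ shrink as $x$ leaves $W$, equivalently by setting up $\mu$ as a collapse relative to the tame compactification $M=\operatorname{int}(W)$, at no loss of generality. Second, the homological identification above is the usual local‑to‑global proof of Poincaré duality: the statement is natural for codimension‑zero embeddings and the $\mu$‑pairing is block diagonal on disjoint unions (since $\operatorname{im}(F_x)$ lies in the component of $x$), so a Mayer–Vietoris induction over a finite good cover of $W$ reduces everything to $M=\mathbf{R}^n$, where it is transparent because $\mu(0,-)\colon S^n\to S^n$ is the degree‑one collapse onto the chart $\operatorname{im}(F_0)$. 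With these in hand the homotopy‑theoretic core — finiteness plus "a homology isomorphism of finite spectra is an equivalence" — is formal.
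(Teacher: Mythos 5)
Your proposal is correct in outline and shares the paper's skeleton --- pass to finite spectra (using tameness), show the adjoints are homology isomorphisms with field coefficients, and conclude by the homology Whitehead theorem plus the $\mathbb{Z}\to\mathbb{Q}\to\bigoplus_p\mathbb{Z}_{p^\infty}$ argument --- but you identify the homology pairing with Poincar\'e duality by a genuinely different route. The paper chooses transverse simplicial representatives and identifies the pairing with the signed intersection count, citing Goresky for the chain-level transversality statement; you instead read $(x,y)\mapsto F_x^{-1}(y)$ as the Pontryagin--Thom collapse onto the framed tubular neighborhood $E=\{(x,y): y\in\operatorname{im}(F_x)\}$ of the diagonal, so that on homology the pairing becomes slant product with the (trivialized) Thom class of $\nu_\Delta\cong TM$, i.e.\ the classical diagonal proof of Poincar\'e duality, with a Mayer--Vietoris reduction to $\mathbb{R}^n$ offered as a self-contained fallback. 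Your route avoids the intersection-theoretic input and stays closer to the ``tautological'' nature of the pairing (which is also what makes the equivariance in Section 6 transparent), at the cost of invoking --- or re-proving via the local-to-global induction --- the standard theorem that slant with the diagonal Thom class equals cap with the fundamental class; the paper's route is more hands-on and works directly with cycles.

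Two caveats. First, the continuity problem you raise is not actually there: only the second factor is compactified, so the basepoint of $M_+\wedge M^+$ is the image of $M_+\vee M^+$ alone; the diagonal is closed in $M\times M^+$ and disjoint from that image, and continuity of the pairing (including at the basepoint) follows from continuity of $x\mapsto F_x$ in the compact-open topology, since sets of the form $\{(x,F_x(v)):|v|\le R\}$ are closed in $M\times M^+$. This matters because your proposed repair replaces the given framing by another one, which would prove the theorem for a pairing other than the one in the statement; fortunately no repair is needed. Second, the sentence ``since this Thom class comes from a global trivialization, that slant product is the cap product with the fundamental class'' is not by itself a proof --- the identification of the diagonal slant product with Poincar\'e duality is a genuine (if standard) theorem --- and the Mayer--Vietoris alternative needs the naturality you invoke to be checked with care: the loose framing on $M$ does not restrict to a loose framing of an open $U\subset M$ (the images $\operatorname{im}(F_x)$ can leave $U$), so one must shrink the framing on $U$ and argue the resulting pairing is unchanged up to homotopy, much as the paper shrinks its framing to achieve cellularity. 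With those points patched, your argument goes through.
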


\begin{proof}

We first address the question of continuity of $\chi_{F}$ by analyzing the convergent sequences of $M_+ \wedge M^+$. This suffices since $M$ is a tame manifold, hence $M_+ \wedge M^+$ is first countable.

Suppose we have a sequence $(x_i,y_i) \rightarrow (x,\infty)$, then we must show the images $s_i \rightarrow \infty$. We know the embeddings $F_{x_i}$ converge to $F_x$ in the sup topology, so eventually $\operatorname{dist}(\mathrm{im}(F_{x_i}), \mathrm{im}(F_x))<\epsilon$ for $\epsilon$ arbitrarily small, and so there is a compact set $X$ eventually containing all the $\mathrm{im}(F_{x_i})$. Hence, the sequence $s_i$ is eventually constant at $\infty$.

A similar argument shows that if $(x_i,y_i) \rightarrow (x,y)$ with $y \not\in \mathrm{im}(F_x)$, then $s_i \rightarrow \infty$. This leaves $(x_i,y_i)\rightarrow s_i$ with $y \in \mathrm{im}(F_x)$. This implies the sequence $y_i$ is eventually contained in $\mathrm{im}(F_x)$. Then since $F_{x_i}$ converges to $F_x$ in the sup norm,  by increasing $i$ we can make the distance of $F^{-1}_{x_i}(y_i)$ and $F^{-1}_x(y)$ arbitrarily small since $y_i \rightarrow y$ . Since these are the three types of convergent sequences in $M_+ \wedge M^+$, our pairing is continuous.

We will show the adjoint of $\chi_{F}$ is an equivalence by showing that it induces isomorphisms in integral homology. This suffices since for finite spectra we have the homology Whitehead theorem: a map inducing isomorphism in integral homology is a weak equivalence. By \cite[Corollary 3A.7]{hatcher}, we can instead show homology isomorphisms for coefficients in $k=\mathbb{F}_p,\mathbb{Q}$.

To do this, we show that the pairing $H_*(M_+ ;k) \otimes H_*(M^+;k) \rightarrow H_*(S^n; k)$ is the Poincaré  duality pairing, since it is well known either adjoint is an isomorphism. For manifold representatives, the Poincaré  duality pairing may be computed by picking transverse representatives and counting the signed intersections. Goresky shows that this is true for arbitrary classes \cite[Section 3.6]{goresky_1981}.

Namely, if $a \in H_i(M_+ ; k), b \in H_{n-i}(M^+ ; k)$, there are transverse representatives $\alpha$ and $\beta$, in the sense that all intersections occur as the transverse intersection of top dimensional simplices, and by subdivision we can also assume there is at most one intersection per pair of complementary simplices. Further, the Poincaré  duality pairing is given by counting oriented intersections. There are two important consequences of this. First, if we consider the restriction of $\chi_{\bar{F}}$ to
 $\alpha_+ \wedge \beta \rightarrow S^n$, then, after possibly shrinking the loose framing, the pairing is cellular if we give $S^n$ the 2-cell structure with basepoint at $\infty$. This is because the intersections are in the interior of the top dimensional cells. Second, if we are given a pair of simplices 
 $d \in \alpha , e\in \beta$ of dimensions $i,n-i$ 
 respectively, then the  degree of the map $d \times e /\partial (d \times e) \rightarrow S^n$ at $0 \in S^n$ is either $0$, if there is no intersection of $d$ and $e$, or $\pm 1$, depending on the orientation of the intersection. This follows from the fact that a transverse intersection locally looks like $\mathbb{R}^i \cap \mathbb{R}^{n-i}$ inside $\mathbb{R}^n$, in which case one can verify the degree by hand. 

Hence, the image of $[\alpha] \otimes [\beta] $ in $H_n(S^n)$ is given by the sum over the signed intersections of the top dimensional simplices of $\alpha$ and $\beta$ which is exactly the signed intersection number.

\end{proof}

\section{Disk and sphere bundles}
In this section, we prove some results about disk and sphere bundles which will be required to study a twisted version of $(M^+)^{\wedge k}/ \Delta^{\mathrm{fat}}((M^+)^{\wedge k})$ in the case $M$ is not framed. The main point is that equivalences of bundles will imply suitably fiber preserving homotopy equivalences of objects constructed from them. All bundles will be assumed to be over paracompact spaces.

Our fiber bundles always have structure group $\mathrm{Homeo}(-)$. When possible, we name only the total spaces of our fiber bundles rather than the projection maps. Let $E \rightarrow X$ be a fiber bundle, for $x \in X$ we let $E|_x$ denote the fiber over $X$. If $y \in E$, we let $E_y$ denote the fiber that $y$ is contained in. If $E \rightarrow X$ is a closed disk bundle, we let $O(E)$  refer to the associated open disk bundle over $X$, and let $S(E)$  refer to the bounding sphere bundle. We abusively write $\bar{M}$ to refer to some fixed manifold which has $M$ as its interior.

We recall the notion of an equivalence of sphere bundles over different bases:

\begin{dfn}
Let $E_1 \rightarrow X_1,E_2 \rightarrow X_2$ be two sphere bundles. A homotopy equivalence of sphere bundles $f:E_1 \rightarrow E_2$ is a fiber preserving map $f:E_1 \rightarrow E_2$, so that there is a fiber preserving map $g:E_2 \rightarrow E_1$ with the property $f \circ g$ and $g \circ f$ are homotopic to the identity through fiber preserving maps.
\end{dfn}

We state some basic results about these objects with emphasis on how the particular homotopies can be chosen.

\begin{lem}
The pullback $f^*(E_2)$ of a sphere bundle $E_2 \rightarrow X_2$ along a homotopy equivalence $f:X_1 \rightarrow X_2$ is a homotopy equivalent sphere bundle, moreover, given a homotopy inverse $g$ of $f$ the homotopies witnessing the homotopy equivalence of sphere bundles can be chosen over any homotopies witnessing that $f$ and $g$ are inverse.
\end{lem}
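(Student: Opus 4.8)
The proof will rest on the homotopy lifting property. A fiber bundle over a paracompact base is a Hurewicz fibration, and a pullback of a fibration is again a fibration, so both $E_2 \to X_2$ and the pullback $f^*E_2 \to X_1$ have the homotopy lifting property with respect to all spaces. The canonical bundle map $c_f : f^*E_2 \to E_2$ covering $f$ is fiber preserving and restricts to a homeomorphism on each fiber; as it sits in a pullback square over $f$ whose vertical maps are fibrations and $f$ is an equivalence, $c_f$ is moreover a homotopy equivalence of total spaces. Fix a homotopy inverse $g$ of $f$, a homotopy $H$ from $g\circ f$ to $\mathrm{id}_{X_1}$, and a homotopy $K$ from $f\circ g$ to $\mathrm{id}_{X_2}$. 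The plan is to produce a fiber preserving $\psi : E_2 \to f^*E_2$ covering $g$, together with fiber preserving homotopies $c_f\circ\psi \simeq \mathrm{id}_{E_2}$ lying over $K$ and $\psi\circ c_f \simeq \mathrm{id}_{f^*E_2}$ lying over $H$; the map $c_f$ will be the equivalence of sphere bundles in the direction of $f$.

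For $\psi$ and the first homotopy, apply the covering homotopy theorem to $E_2 \to X_2$ and the reverse of $K$ (a homotopy from $\mathrm{id}_{X_2}$ to $f\circ g$), starting the lift at $\mathrm{id}_{E_2}$; the terminal value of the lift is a fiber preserving map $E_2 \to E_2$ covering $f\circ g$. Because $c_f$ is a fiberwise homeomorphism, this terminal map factors uniquely as $c_f\circ\psi$ for a fiber preserving $\psi : E_2 \to f^*E_2$ covering $g$, and reading the lifted homotopy backwards exhibits a fiber preserving homotopy $c_f\circ\psi \simeq \mathrm{id}_{E_2}$ lying exactly over $K$.

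For the second homotopy I would examine the fibration over $X_1 \times I$ obtained by pulling $f^*E_2 \to X_1$ back along $H : X_1\times I \to X_1$. Its restriction to $X_1\times\{1\}$ is $f^*E_2$, its restriction to $X_1\times\{0\}$ is $(g\circ f)^*(f^*E_2)$, and the canonical map from it down to $f^*E_2$ covering $H$ restricts at the two ends to $\mathrm{id}_{f^*E_2}$ and to the canonical map $(g\circ f)^*(f^*E_2) \to f^*E_2$ covering $g\circ f$. Using that a fibration over $X_1\times I$ is fiber homotopy equivalent over $X_1\times I$, relative to the end $X_1\times\{1\}$, to the pullback of its restriction over $X_1\times\{1\}$ along $(x,t)\mapsto x$, and composing with the canonical map to $f^*E_2$, one obtains a fiber preserving homotopy over $H$ from $\mathrm{id}_{f^*E_2}$ to some fiber preserving self-map $W$ of $f^*E_2$ covering $g\circ f$.

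The delicate point, which I expect to be the main obstacle, is that the covering homotopy theorem controls only one endpoint of a lifted homotopy and is blind to which homotopy of the base is being traversed, so the maps $W$ and $\psi\circ c_f$ — although both fiber preserving over $g\circ f$, both homotopy equivalences on every fiber, and both homotopic to $\mathrm{id}_{f^*E_2}$ as plain maps (using that $c_f$ is a homotopy equivalence of total spaces together with the first homotopy) — need not literally coincide. To close the gap I would invoke Dold's theorem that a fiberwise homotopy equivalence between fibrations over a paracompact base is a fiber homotopy equivalence, which permits connecting $W$ to $\psi\circ c_f$ by a fiber preserving homotopy over the constant base homotopy at $g\circ f$; splicing this on yields $\psi\circ c_f \simeq \mathrm{id}_{f^*E_2}$. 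Finally, one checks that the only base homotopies entering the argument beyond $H$ and $K$ are reparametrizations of these and concatenations of them with constant paths, which are still genuine homotopies witnessing $g\circ f\simeq\mathrm{id}_{X_1}$ and $f\circ g\simeq\mathrm{id}_{X_2}$, built functorially from the given data; this is exactly what makes the conclusion portable, and in particular keeps all homotopies equivariant in the setting of the main theorem whenever $H$ and $K$ are chosen equivariant.
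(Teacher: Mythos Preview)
Your strategy is the same as the paper's: transport along the given base homotopies to build the inverse and the witnessing fiber homotopies. The paper works with a slightly sharper tool than the covering homotopy property, namely the classical fact that a fiber \emph{bundle} over $X\times I$ is bundle-isomorphic to the product of $I$ with its restriction over $X\times\{0\}$. Applied to the pullback of $E_2$ along the homotopy $fg\simeq\mathrm{id}$, this simultaneously produces the inverse $G$ and the fiberwise homotopy $FG\simeq\mathrm{id}$ over that base homotopy; the other direction is handled by pulling back along $f\circ(gf\simeq\mathrm{id})$. This keeps everything in the category of bundles and avoids your lift-then-factor maneuver, but the content is the same.

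There is a genuine gap at your step invoking Dold. Dold's theorem asserts that a fiber-preserving map which is an ordinary homotopy equivalence admits a fiber homotopy inverse; it does \emph{not} assert that two fiber-preserving maps which are ordinarily homotopic are fiber homotopic over the constant base homotopy. Your $W$ and $\psi\circ c_f$ are both fiber homotopy equivalences covering $g\circ f$ and both plain-homotopic to $\mathrm{id}_{f^*E_2}$, but nothing you cite forces a fiberwise homotopy between them. (Think of two fiberwise self-equivalences of a trivial $S^n$-bundle differing by a nonzero class in $[X_1,\mathrm{hAut}(S^n)]$.) To be fair, the paper's proof is informal at exactly the same juncture: its ``similarly, in the other direction'' produces a self-map of $f^*E_2$ over $gf$ homotopic to the identity over $H'$, but does not argue that this self-map equals $GF$ for the $G$ already constructed. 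Your closing remark---that the homotopies actually produced lie over concatenations and reparametrizations of $H$ and $K$, which are still proper whenever $H$ and $K$ are---is the correct salvage for the application, and is what both arguments in fact deliver.
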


\begin{proof}
There is an obvious map $F:f^*(E_2) \rightarrow E_2$. Suppose $g$ is a homotopy inverse of $f$. To construct the inverse map we must construct a map from $E_2|_x \rightarrow E_2|_{fg(x)}$. It is a classical fact (see \cite[Lemma 7.2]{mitch} for a proof) that a fiber bundle over $X \times I$ is isomorphic to the product of the interval with a fiber bundle over $X$. Applying this to the pullback of $E_2$ along the homotopy $H: Y \times I \rightarrow Y$ from $fg$ to the identity yields not only a map $E_2|_{x} \rightarrow E_2|_{fg(x)}$ sufficient to define an inverse $G$ of $F$, but a family of maps covering $H$ that demonstrates $G$ is a right homotopy inverse of $F$. Similarly, in the other direction we may pull back via the homotopy $fgf \sim f$ obtained from levelwise postcomposing any homotopy $gf \sim 1$ with $f$.
\end{proof}

We use the notation $X^+$ to denote the one point compactification of a space $X$. In the case $X$ is already compact, it is defined to be $X_+$, $X$ with a disjoint basepoint.

\begin{dfn}
An $n$-spherical fibration $E \rightarrow X$ is a fibration such that the fiber has the homotopy type of $S^n$.
\end{dfn}

\begin{dfn}
The Thom space $X^E$ of a spherical fibration $\rho:E \rightarrow X$ is $\operatorname{cofiber}(\rho)$.
\end{dfn}

\begin{prp}
If $E \rightarrow \bar{M}$ is a closed disk bundle, then \[(O(E)|_M)^+ \cong \bar{M}^{S(E)}/\partial\bar{M}^{S(E)|_{\partial \bar{M}}}.\]
\end{prp}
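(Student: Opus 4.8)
The plan is to realize both sides as quotients of the single compact space $E$ and match them up by elementary point-set topology. Since $M$ is tame, $\bar M$ is compact, hence so is the total space of the closed disk bundle $E\to\bar M$. I would first observe that $O(E)|_M$ sits inside $E$ as an open subspace: a point $(x,v)$ of $E$ lies in $O(E)|_M$ precisely when $x$ is interior to $\bar M$ and $v$ lies in the open disk, so the complement of $O(E)|_M$ in $E$ is exactly $S(E)\cup E|_{\partial\bar M}$, which is closed. As $O(E)|_M$ is locally compact Hausdorff and $S(E)\cup E|_{\partial\bar M}$ is closed in the compact Hausdorff space $E$, the standard description of one-point compactifications (if $Z$ is compact Hausdorff and $A\subseteq Z$ is closed then $(Z\setminus A)^+\cong Z/A$) gives $O(E)|_M^+\cong E/(S(E)\cup E|_{\partial\bar M})$.

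Next I would unwind the right-hand side. Using the fiberwise radial structure of the disk bundle, the map $S(E)\times[0,1]\to E$, $(s,t)\mapsto ts$, together with the zero section $\bar M\to E$, identifies $E$ over $\bar M$ with the mapping cylinder of the sphere bundle projection $\rho\colon S(E)\to\bar M$, carrying $S(E)\subset E$ onto the end $S(E)\times\{1\}$. Collapsing that end therefore identifies $E/S(E)$ with $\operatorname{cofiber}(\rho)=\bar M^{S(E)}$; restricting the same picture over $\partial\bar M$ identifies $E|_{\partial\bar M}/S(E)|_{\partial\bar M}$ with $\partial\bar M^{S(E)|_{\partial\bar M}}$. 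Because $S(E)|_{\partial\bar M}=S(E)\cap E|_{\partial\bar M}$, these two identifications are compatible with the inclusion $E|_{\partial\bar M}\hookrightarrow E$, so the second exhibits $\partial\bar M^{S(E)|_{\partial\bar M}}$ precisely as the image of $E|_{\partial\bar M}$ inside $\bar M^{S(E)}=E/S(E)$.

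Finally I would invoke the associativity of collapsing: for closed subspaces $A,B$ of $E$, collapsing $A\cup B$ agrees with first collapsing $A$ and then collapsing the image of $B$, which is the quotient $B/(A\cap B)$. Taking $A=S(E)$ and $B=E|_{\partial\bar M}$ yields
\[ O(E)|_M^+ \;\cong\; E/(S(E)\cup E|_{\partial\bar M}) \;\cong\; (E/S(E))\big/(E|_{\partial\bar M}/S(E)|_{\partial\bar M}) \;\cong\; \bar M^{S(E)}/\partial\bar M^{S(E)|_{\partial\bar M}}, \]
as desired. There is no genuine obstacle here — the argument is entirely point-set — but the step warranting the most care is keeping track of which subspaces are closed and compact so that the one-point compactification identification is valid; the mapping-cylinder identification of $E$ is the only place that uses the radial (linear) structure of the disk bundle, which is available in all situations where the proposition will be applied.
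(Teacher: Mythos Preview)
Your argument is correct and is exactly the kind of elementary point-set unwinding one would expect; the paper itself states this proposition without proof, so there is nothing to compare against beyond noting that your write-up supplies the details the author left implicit. Your caveat about the radial structure is well placed: the homeomorphism $E\cong\mathrm{Cyl}(\rho)$ genuinely uses that the fibers are linear disks, and in the paper's applications the disk bundles in question arise as disk sub-bundles of $\mathbb{R}^d$-bundles, so this is available.
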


\begin{dfn}
Given $E_1 \rightarrow X_1, E_2 \rightarrow X_2$, we define a fiberwise map $F:E_1^+ \rightarrow E_2^+$ to be a pointed map so that if $x \in E_1$, then $(E_1)_x^+$ is mapped into $(E_2)_{F(x)}^+$.
\end{dfn}

Recall a proper map is a map where preimages of compact sets are compact. One point compactification is covariant with respect to such maps. The most obvious examples of fiberwise maps come from taking compactifications of proper bundles maps.

\begin{dfn}
A map $f:X \rightarrow Y$ is a proper homotopy equivalence if there exists a proper map $g:Y \rightarrow X$ such that $fg$ and $gf$ are homotopic to the identity through proper maps.
\end{dfn}

\begin{dfn}
Given $E_1 \rightarrow X_1, E_2 \rightarrow X_2$ a fiberwise homotopy equivalence $E_1^+ \sim E_2^+$ is a fiberwise map $f:E_1^+ \rightarrow E_2^+$ such that there is a fiberwise map $g:E_2^+ \rightarrow E_1^+$ with their compositions homotopic to the identity through fiberwise maps.
\end{dfn}

The most obvious examples of fiberwise homotopy equivalences come from homotopy equivalences of sphere bundles which cover proper homotopy equivalences of the bases, as in the following:

\begin{lem}
Suppose we have two closed disk bundles $E_1 \rightarrow X_1$ and $E_2 \rightarrow X_2$ and a homotopy equivalence of sphere bundles $F:S(E_1) \simeq S(E_2)$, with inverse $G$, covering a proper homotopy equivalence $f: X_1 \rightarrow X_2$ (with inverse $g$). Then $F$ induces a fiberwise homotopy equivalence $F^+:O(E_1)^+ \rightarrow O(E_2)^+$ with inverse $G^+$.
\end{lem}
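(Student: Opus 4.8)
The statement packages together two ingredients we have already built: first, that a closed disk bundle $E \to X$ has the homeomorphism $O(E)^+ \cong X^{S(E)}$ when $X$ is compact (the proposition relating open disk bundles to Thom spaces — here we use it fiberwise, so over each point $x$ the fiber $O(E)_x^+ = (D^n)^+ \cong S^n$ agrees with the Thom space of $S(E)_x = S^{n-1}$); and second, the earlier lemma that a homotopy equivalence of sphere bundles covering a homotopy equivalence of bases yields the relevant homotopies \emph{over} whatever homotopies witness that the base maps are inverse. So I would reduce the claim to feeding the proper homotopies $H: X_1 \times I \to X_1$ (from $gf$ to $\mathrm{id}$) and $H': X_2 \times I \to X_2$ (from $fg$ to $\mathrm{id}$) into that machinery and then one-point-compactifying everything in sight.

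\smallskip
First I would define $F^+$ and $G^+$. The sphere bundle equivalence $F: S(E_1) \to S(E_2)$ covering the proper map $f$ extends canonically to a fiberwise map of the associated closed disk bundles $\bar F: E_1 \to E_2$ (cone on each fiber, or equivalently the mapping cylinder/join description), still covering $f$ and still proper since $f$ is proper and the fibers are compact. Restricting to the open disk bundles $O(E_1) \to O(E_2)$ gives a proper bundle map, and one-point-compactifying — which is functorial for proper maps — produces the fiberwise map $F^+: O(E_1)^+ \to O(E_2)^+$ sending the fiber $O(E_1)_x^+$ into $O(E_2)_{f(x)}^+$. Do the same with $G$ to get $G^+$. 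The key point is that $F^+$ restricted to each compactified fiber is exactly the degree $\pm 1$ self-map of $S^n$ obtained by coning the fiber homotopy equivalence $S^{n-1} \to S^{n-1}$, hence a homotopy equivalence on fibers.

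\smallskip
Next, the homotopies. By the earlier lemma, $G \circ F: S(E_1) \to S(E_1)$ is fiber-preserving-homotopic to the identity, and — this is the clause we extract from that lemma's proof — the homotopy can be chosen lying over $H: X_1 \times I \to X_1$. Coning each fiber of this homotopy (and using the product-bundle-over-$X \times I$ trick exactly as in that lemma) gives a homotopy $\bar G \bar F \simeq \mathrm{id}$ of fiberwise maps of closed \emph{disk} bundles over $H$; restricting to open disk subbundles and applying one-point compactification (which takes a proper homotopy over $H$ to a homotopy of fiberwise maps, since $H$ is proper) yields $G^+ F^+ \simeq \mathrm{id}$ through fiberwise maps, covering the proper homotopy $H$. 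The symmetric argument over $H'$ gives $F^+ G^+ \simeq \mathrm{id}$. This is precisely the assertion.

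\smallskip
\textbf{Main obstacle.} The routine-but-delicate point is bookkeeping properness through the compactification functor: one must check that (i) coning a proper bundle map over a proper map stays proper, (ii) restricting from the closed to the open disk bundle does not destroy properness of the total map even though individual open fibers are noncompact (it does not, because properness is tested against compact subsets of the base together with the compact sphere fibers, and the ``$\infty$'' section is handled correctly by one-point compactification), and (iii) a homotopy that is ``fiberwise and covers a proper homotopy $H$'' compactifies to a ``homotopy through fiberwise maps'' in the sense of the relevant definition — i.e. the compactification of $O(E_1) \times I$ is $O(E_1)^+ \wedge I_+$ with the right basepoint behavior. None of these is hard, but they are the crux; everything else is a direct appeal to the proposition $O(E)^+ \cong X^{S(E)}$ and to the sphere-bundle lemma. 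I would also remark that equivariance, when group actions are present, is automatic here since every construction (coning, restriction, compactification) is natural.
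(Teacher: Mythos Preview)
Your approach is essentially the same as the paper's: extend the sphere bundle equivalence to the closed disk bundles via the Alexander trick (coning fiberwise), extend the sphere bundle homotopies in the same way, then pass to the open disk bundles and one-point compactify everything, noting that the relevant maps are levelwise proper. The paper's proof is a terse three-sentence version of exactly this; your added discussion of the properness bookkeeping and the compactification of homotopies is useful elaboration but not a different idea.
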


\begin{proof}
By the Alexander trick, i.e. coning off our sphere bundle, we can extend our equivalence of sphere bundles to the disk bundles. By proceeding levelwise, we can extend the homotopies witnessing the equivalence of sphere bundles.  Passing to the open disk bundle, all these new maps will be levelwise proper, so levelwise one point compactifying the homotopies witnesses a fiberwise homotopy equivalence $F^+:E_1^+ \rightarrow E_2^+$ with inverse $G^+:E_2^+ \rightarrow E_1^+$.
\end{proof}

\section{Normal disk bundles}

In this section we review some results about normal disk bundles. A normal disk bundle of a manifold naturally ``untwists'' the manifold in the sense that the total space is always framed. By considering normal disk bundles, we may reduce duality questions to the framed case. These bundles have the remarkable property that they are suitably homotopy invariant, which is the key to our argument.

\begin{thm}
There exists a closed disk bundle $\hat{\nu} \rightarrow M$, so that $\hat{\nu}$ has a codimension $0$ embedding into Euclidean space.
\end{thm}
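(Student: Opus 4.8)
The plan is to reduce to a closed manifold and then combine Whitney's embedding theorem with the tubular neighborhood theorem. Using tameness, fix a compact smooth $n$-manifold with boundary $\bar M$ whose interior is $M$, and form the double $D\bar M = \bar M \cup_{\partial\bar M} \bar M$. This is a closed smooth $n$-manifold which contains $\bar M$, and hence contains $M = \operatorname{int}(\bar M)$, as a submanifold.

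By the Whitney embedding theorem there is, for $N$ large, a smooth embedding $D\bar M \hookrightarrow \mathbb{R}^N$; fix the standard metric on $\mathbb{R}^N$ and let $\nu \rightarrow D\bar M$ be the associated normal bundle. Since $D\bar M$ is compact, the tubular neighborhood theorem provides a radius $\epsilon > 0$ for which the exponential map restricts to a diffeomorphism from the closed disk subbundle $D_\epsilon\nu \subset \nu$ of radius $\epsilon$ onto a closed neighborhood of $D\bar M$ in $\mathbb{R}^N$. In particular $D_\epsilon\nu \rightarrow D\bar M$ is a closed disk bundle whose total space admits a codimension $0$ embedding into $\mathbb{R}^N$.

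Now set $\hat\nu := D_\epsilon\nu|_M$, the restriction of this bundle along the open inclusion $M \hookrightarrow D\bar M$. This is again a closed disk bundle, now over $M$, and its total space is an open subset of the total space of $D_\epsilon\nu$; composing with the embedding $D_\epsilon\nu \hookrightarrow \mathbb{R}^N$ then exhibits $\hat\nu$ as a codimension $0$ submanifold of $\mathbb{R}^N$ (with boundary the sphere bundle $S(\hat\nu)$), as desired.

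The step where tameness does the essential work — and the obstruction one faces without it — is the passage from the open manifold $M$ to a compact model. For a general open manifold a smooth embedding into Euclidean space need not admit any closed tubular neighborhood: the tube radius may be forced toward zero near the ends, so one cannot simply take a normal disk bundle of $M$ itself and expect an honest closed disk bundle. Working on the compact double $D\bar M$ sidesteps this entirely. The remaining ingredients — the Whitney embedding theorem and the existence of closed tubular neighborhoods for compact submanifolds, together with the elementary observation that a codimension $0$ embedding of a disk bundle restricts to a codimension $0$ embedding on any open subbundle — are classical, and I would cite them from a standard differential topology reference rather than reprove them.
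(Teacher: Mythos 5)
Your argument is correct for smooth manifolds, and it shares the paper's first step (reduce to the compact case by doubling $\bar{M}$ and then restricting the resulting bundle to $M=\operatorname{int}(\bar{M})$), but it diverges in how the compact case is handled. You invoke the Whitney embedding theorem and the smooth tubular neighborhood theorem, so the normal disk bundle and its codimension~$0$ embedding come for free from the exponential map with a uniform radius $\epsilon$. The paper deliberately avoids this route: its setting (cf.\ the abstract and the heading of the section on normal disk bundles) is topological manifolds, for which there is no exponential map and no tubular neighborhood theorem. Instead it embeds the compact manifold locally flatly in $\mathbb{R}^N$, invokes Brown's theorem that in high enough codimension a locally flat embedding has a normal $\mathbb{R}^d$-bundle embedded with codimension~$0$, and then uses Mazur's trick (after one stabilization) to extract a closed disk subbundle. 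So your proof is more elementary but strictly less general; it suffices for the main theorem as stated in the introduction (which assumes smoothness), but it would not support the topological-manifold claims elsewhere in the paper. One small point you share with the paper and should be aware of: tameness as defined only provides a compact \emph{topological} $\bar{M}$, so in the smooth setting one must also know (or assume) that $M$ is diffeomorphic to the interior of a compact \emph{smooth} manifold with boundary before forming a smooth double.
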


\begin{proof}
Assume $M$ is compact; the compact case implies the noncompact case by doubling $\bar{M}$ and restricting to $\operatorname{int}(\bar{M})$.

First embed $M$ into $\mathbb{R}^{N}$ in a locally flat manner \cite[Lemma 4.12]{kupers}. If this is high enough dimension the image is the zero section of a codimension 0 embedded $\mathbb{R}^d$ bundle \cite[Page 65]{hirsch}. After stabilizing once, we can ensure that there is a closed disk sub bundle (attributed to Mazur in \cite[Page 219]{browder}). We let this disk sub bundle be our closed disk bundle.
\end{proof}

\begin{dfn}
The normal open disk bundle is $\nu=O(\hat{\nu})$.
\end{dfn}

For the rest of this paper, we will reserve the symbol $\hat{\nu}$ to refer to such closed disk bundles and $\nu$ to refer to the corresponding open disk bundle. Call the rank of this bundle $c$.

The compact case of the following theorem is due to Spivak \cite[Proposition 5.6]{spivak}, and the general case is due to Wall \cite[Corollary 3.4]{wall}.

\begin{thm}[Spivak, Wall]
Up to homotopy, $\bar{M}^{S(\hat{\nu})} /\partial\bar{M}^{S(\hat{\nu})|_{\partial\bar{M}}}$ has a cell structure with a single $(n+c)$-cell, and this cell receives a degree $1$ map from $S^{n+c}$. Moreover, for sufficiently high and equal rank disk bundles with this property (not assumed to be from an embedding), the sphere bundle is unique up to fiberwise homotopy equivalence covering the identity.
\end{thm}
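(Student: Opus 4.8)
The statement has two parts: (i) $\bar{M}^{S(\hat\nu)}/\partial\bar M^{S(\hat\nu)|_{\partial\bar M}}$ has, up to homotopy, a CW structure with exactly one $(n+c)$-cell, which receives a degree one map from $S^{n+c}$; and (ii) a uniqueness statement for the underlying sphere fibration among high-rank disk bundles with that property. For (i), I would first use the Alexander-trick identification from the previous section (the Proposition relating $O(E)|_M^+$ to the relative Thom space) to rewrite the space in question as $\nu^+$, the one-point compactification of the total space of the normal \emph{open} disk bundle. Since $\hat\nu$ embeds with codimension $0$ into some $\mathbb R^{n+c}$, $\nu$ is an open subset of Euclidean space, so $\nu^+$ is a cofiber $S^{n+c}/(S^{n+c}\setminus \nu)$, i.e. it receives an evident collapse map $S^{n+c}\to \nu^+$. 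The cohomology of $\nu^+$ is, by Thom isomorphism (the open disk bundle is $\mathbb R^c$-oriented — this needs a word, see below), a shift of $H^*(\bar M,\partial\bar M)$, which by Poincaré–Lefschetz duality is $H_{n-*}(\bar M)$; in particular the top cohomology $H^{n+c}(\nu^+)\cong H_0(\bar M)\cong \mathbb Z$ is generated by the image of the fundamental class under the collapse, so the collapse map has degree one onto the top cell. To get the actual cell structure with a \emph{single} top cell, I would invoke the general fact (this is really the content of Wall's argument, \cite{wall}) that a finite-dimensional Poincaré-dual-type complex with the homology of $\Sigma^c$ of an $n$-complex can be built with one top cell; concretely, build a CW approximation of $\nu^+$ skeleton by skeleton and observe that since $H_{n+c}$ is free of rank one and $H_{n+c+1}=0$, the $(n+c)$-skeleton can be taken to be a single cell attached to the $(n+c-1)$-skeleton, and then a degree-one map $S^{n+c}\to\nu^+$ exists by the Hurewicz/cellular argument (it is the attaching-then-collapse, or simply the Euclidean collapse map above).

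For part (ii), the uniqueness statement, the key input is Spivak/Wall uniqueness of the normal spherical fibration of a Poincaré complex. The space $X:=\bar M^{S(E)}/\partial\bar M^{S(E)|_{\partial\bar M}}$ together with the degree-one map $S^{n+c}\to X$ and the spherical fibration $S(E)\to\bar M$ is precisely the data of a (Spivak) normal fibration for the Poincaré pair $(\bar M,\partial\bar M)$ (or for $M$ as a finite Poincaré complex, since $M\simeq\bar M$). I would cite Wall's theorem (Theorem 3.5 and surrounding material in \cite{wall}) that such a normal spherical fibration, of fixed sufficiently large rank, is unique up to fiberwise homotopy equivalence over the identity of the base. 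One must check that the hypothesis "degree-one map from $S^{n+c}$ hitting the top cell" is exactly the normal-fibration condition, and that "sufficiently high and equal rank" makes the fibration fall in the stable range where Wall's uniqueness applies; both are bookkeeping against \cite{wall}.

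The step I expect to be the main obstacle is pinning down the orientability/Thom-isomorphism issue in part (i) cleanly: the manifold $M$ need not be orientable, so the Thom isomorphism for $S(\hat\nu)$ must be taken with twisted ($\mathbb Z[w_1]$) coefficients, and one must argue that the \emph{total space} $\hat\nu$ is nevertheless framed (as asserted in the opening of Section 5: the normal disk bundle "untwists" $M$, because it embeds with codimension zero in $\mathbb R^{n+c}$), so that $\nu^+$ really does have $H^{n+c}(\nu^+;\mathbb Z)\cong\mathbb Z$ with a degree-one collapse, regardless of $w_1(M)$. Once that is settled, identifying the resulting structure with Wall's normal-fibration data and quoting his uniqueness theorem is essentially formal. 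I would therefore spend most of the write-up on: (a) the chain of homeomorphisms $\bar M^{S(\hat\nu)}/\partial\bar M^{\,\cdot}\cong\nu^+\cong S^{n+c}/(S^{n+c}\setminus\nu)$ via the codimension-zero embedding and the Proposition of Section 4; (b) the degree-one and single-top-cell claims via the collapse map and a skeletal CW approximation; and (c) the citation of Wall for uniqueness in the stable range, with a sentence verifying the hypotheses match.
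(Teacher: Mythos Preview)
The paper does not give a proof of this theorem: it is stated as a result of Wall, cited to \cite{wall}, and used as a black box in the subsequent corollaries. There is therefore no proof in the paper to compare your proposal against.

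That said, your outline is a faithful reconstruction of the standard argument and would serve as an expanded account of what the paper is quoting. The identification of the reduced Thom space with $\nu^+$ via Proposition~4.5, and then with $S^{n+c}/(S^{n+c}\setminus\nu)$ via the codimension-zero embedding, is exactly right; the resulting collapse is the Pontryagin--Thom map, which has degree one essentially by construction. Your worry about orientability is not a genuine obstacle: since $\nu$ is an open subset of $\mathbb{R}^{n+c}$, excision gives $H_{n+c}(\nu^+;\mathbb{Z})\cong\mathbb{Z}$ regardless of $w_1(M)$, and the collapse hits a generator. For part~(ii) you correctly recognize the data ``spherical fibration plus degree-one map to the reduced Thom space'' as precisely a Spivak normal fibration for the Poincar\'e pair $(\bar M,\partial\bar M)$, and the uniqueness assertion is then exactly Wall's stable uniqueness theorem. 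The only thing to tighten in a full write-up would be the single-top-cell claim: rather than a skeletal approximation argument, it is cleaner to note that $\nu^+$ has the homotopy type of a CW complex of dimension $n+c$ with $H_{n+c}\cong\mathbb{Z}$, and then a standard obstruction-theory or Wall-finiteness argument trades cells to leave one in the top dimension.
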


The following is well known in the compact case.

\begin{cor}
Assume we have a proper homotopy equivalence of manifolds $f:M_1 \simeq M_2$ and closed normal disk bundles $\hat{\nu}_1, \hat{\nu}_2$ of sufficiently high and equal rank for $M_1,M_2$, respectively. Then $f^*(S(\hat{\nu}_2))$ is homotopy equivalent as a sphere bundle to $S(\hat{\nu}_1)$, covering the identity. Equivalently, every proper homotopy equivalence $M_1 \simeq M_2$ can be covered by a homotopy equivalence of the sphere bundles $S(\hat{\nu}_1) \simeq S(\hat{\nu}_2)$. 
\end{cor}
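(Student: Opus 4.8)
The plan is to deduce the corollary from Wall's theorem (the second sentence of the theorem attributed to Wall) together with Lemma on pullbacks of sphere bundles along homotopy equivalences. The key observation is that the property singled out by Wall — namely that $\bar{M}^{S(\hat\nu)}/\partial\bar{M}^{S(\hat\nu)|_{\partial\bar M}}$ has a cell structure with a single top cell receiving a degree $1$ map from a sphere — is a \emph{homotopy-invariant} property of the pair $(\bar M,\partial\bar M)$, hence is transported along $f$. So the strategy is: pull back $S(\hat\nu_2)$ along $f$, verify the pullback still satisfies Wall's hypothesis, and then invoke the uniqueness clause of Wall's theorem to identify it with $S(\hat\nu_1)$ up to fiberwise homotopy equivalence over the identity.

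First I would apply the Lemma to $f\colon M_1\simeq M_2$ with a chosen homotopy inverse $g$: the pullback $f^*(S(\hat\nu_2))\to M_1$ is a sphere bundle homotopy equivalent to $S(\hat\nu_2)$, and the equivalence covers $f$ and $g$. Next I would check that $f^*(S(\hat\nu_2))$ has the same rank as $S(\hat\nu_1)$ (true by the "equal rank" hypothesis, since pullback preserves fiber dimension) and that it satisfies Wall's cell-structure condition. For the latter, since $f$ (and the induced map of total spaces, and the induced map on the relevant Thom spaces) is a homotopy equivalence, the Thom space of $f^*(S(\hat\nu_2))$ relative to its boundary part is homotopy equivalent to $\bar{M_2}^{S(\hat\nu_2)}/\partial\bar{M_2}^{S(\hat\nu_2)|_{\partial}}$, which by Wall's theorem has a single top cell receiving a degree $1$ map from $S^{n+c}$; this property passes across a homotopy equivalence. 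Then by the uniqueness clause of Wall's theorem, applied to the two rank-$c$ sphere bundles $S(\hat\nu_1)$ and $f^*(S(\hat\nu_2))$ over $M_1$ (or $\bar M_1$) — both satisfying the hypothesis, of sufficiently high and equal rank — they are fiberwise homotopy equivalent over the identity. Composing this equivalence with the equivalence $f^*(S(\hat\nu_2))\simeq S(\hat\nu_2)$ covering $f$ from the Lemma gives the desired homotopy equivalence of sphere bundles $S(\hat\nu_1)\simeq S(\hat\nu_2)$ covering $f$, which is the "equivalently" reformulation.

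One technical point I would be careful about is whether everything should be phrased over $M_i$ or over the compact models $\bar M_i$; Wall's theorem is stated in terms of $\bar M$ and $\partial\bar M$, so I would either arrange the normal disk bundles over $\bar M_i$ and note that a proper homotopy equivalence of the interiors extends to (or is already modeled by) a homotopy equivalence of the compact pairs, or else state a version of Wall's uniqueness directly over the open manifold. I would also note that the "sufficiently high" hypothesis lets us stabilize $\hat\nu_1$ and $\hat\nu_2$ until the ranks agree, which is harmless for the final conclusion since stabilizing a normal disk bundle only changes $S(\hat\nu)$ by a fiberwise suspension.

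The main obstacle I expect is verifying carefully that Wall's cell-structure hypothesis is genuinely a homotopy invariant that transfers along $f^*$ — i.e., producing the homotopy equivalence of the relative Thom spaces induced by $f$ and checking it carries the degree-$1$ top cell to a degree-$1$ top cell. This is where one must be precise about how the homotopy equivalence of bases interacts with the Thom space construction (using Proposition on $O(E)|_M^+$ and the Lemma), and it is the only place where real content, as opposed to bookkeeping, enters.
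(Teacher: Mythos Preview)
Your proposal is correct and follows essentially the same route as the paper: pull back $S(\hat\nu_2)$ along $f$, verify Wall's top-cell/degree-$1$ condition transfers via the identification of the relative Thom space with a one-point compactification (Proposition~4.5) together with the fact that proper homotopy equivalences compactify to homotopy equivalences, and then invoke Wall's uniqueness. The one place the paper is more explicit than you is the $M_i$ versus $\bar M_i$ issue you flagged: it extends $f^*(S(\hat\nu_2))$ over $\bar M_1$ by pulling back along a deformation retraction $r\colon \bar M_1 \to \bar M_1 \setminus \operatorname{im}(c)$ onto the complement of a collar, which produces a bundle over $\bar M_1$ isomorphic on $M_1$ to the original pullback.
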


\begin{proof}
Pullback $\hat{\nu}_2$ along our proper homotopy equivalence $f$. Since $M$ is the interior of $\bar{M}$, after changing this bundle by an isomorphism covering the identity, we can assume it extends to the boundary. More precisely, pick a deformation retraction $r:\bar{M}_1 \rightarrow \bar{M}_1 - \operatorname{im}c$ for a collar $c$ of $M_1$. We may pullback $f^*(\hat{\nu}_2)$ along $r$ and obtain a bundle over $\bar{M}_1$ which on $M_1$ is isomorphic to $f^*(\hat{\nu}_2)$ since $r$ is homotopic to the identity.

 We can now apply Proposition 3.5 combined with Lemma 3.2 to $O(f^*(\hat{\nu}_2))^+$ to deduce that $f^*(S(\hat{\nu}_2))$ satisfies the hypothesis of Theorem 4.3, from which the conclusion follows.
\end{proof}

An application of Lemma 3.9 yields:

\begin{cor}
Every proper homotopy equivalence $M_1 \simeq M_2$ induces a fiberwise homotopy equivalence $\nu_1^+ \sim \nu_2^+$.
\end{cor}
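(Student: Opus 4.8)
The plan is to chain together the two results the preceding sections were built to supply: Corollary~5.4, which promotes a proper homotopy equivalence of the base manifolds to a homotopy equivalence of their normal sphere bundles, and Lemma~4.9, which promotes a homotopy equivalence of sphere bundles to a fiberwise homotopy equivalence of the one-point compactifications of the associated open disk bundles. Composing these two implications is essentially the whole argument.

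First I would fix closed normal disk bundles $\hat{\nu}_1\to M_1$ and $\hat{\nu}_2\to M_2$ as produced by Theorem~5.1, arranging that they have equal rank, taken as large as we please. This is harmless: stabilizing a normal disk bundle, that is, replacing $\hat{\nu}$ by $\hat{\nu}\times[-1,1]\to M$, preserves the property of admitting a codimension-$0$ embedding into Euclidean space, so we may assume the two ranks agree and are as high as the hypotheses of Wall's rigidity statement (Theorem~5.3) require.

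Next I would apply Corollary~5.4 to the given proper homotopy equivalence $f\colon M_1\simeq M_2$. It produces a homotopy equivalence of sphere bundles $F\colon S(\hat{\nu}_1)\simeq S(\hat{\nu}_2)$ covering $f$, with an inverse $G$ covering a proper homotopy inverse $g$ of $f$; and since this equivalence is built by composing an isomorphism $S(\hat{\nu}_1)\cong f^{*}S(\hat{\nu}_2)$ over the identity with the canonical bundle map $f^{*}S(\hat{\nu}_2)\to S(\hat{\nu}_2)$, Lemma~4.2 guarantees that the homotopies realizing $GF\sim\mathrm{id}$ and $FG\sim\mathrm{id}$ lie over fixed proper homotopies $gf\sim\mathrm{id}_{M_1}$ and $fg\sim\mathrm{id}_{M_2}$.

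Finally I would apply Lemma~4.9 with $X_i=M_i$ and $E_i=\hat{\nu}_i$. Since $O(\hat{\nu}_i)=\nu_i$ by definition, the lemma outputs exactly a fiberwise homotopy equivalence $F^{+}\colon\nu_1^{+}\to\nu_2^{+}$ with inverse $G^{+}$, the witnessing homotopies covering the above proper homotopies on the base; this is the assertion. I do not expect any genuine obstacle here, since all the content sits inside Corollary~5.4 and Lemma~4.9. The only point deserving care is the bookkeeping in the first step: ensuring the two normal disk bundles have equal, sufficiently large rank, so that Wall's uniqueness theorem applies and the disk bundles named in the cited results genuinely coincide with $\hat{\nu}_1$ and $\hat{\nu}_2$.
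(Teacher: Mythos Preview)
Your proposal is correct and is exactly the argument the paper intends: the corollary is stated without proof because it follows immediately by feeding the output of Corollary~5.4 (a sphere-bundle equivalence covering the given proper homotopy equivalence) into Lemma~4.9 (which upgrades such an equivalence to a fiberwise homotopy equivalence of the compactified open disk bundles). Your extra care about equalizing ranks and tracking that the homotopies lie over proper homotopies via Lemma~4.2 is appropriate bookkeeping, not a deviation from the paper's route.
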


\section{Fat diagonals and configuration spaces}

In this section we define twisted versions of $(M^+)^{\wedge k} / \Delta^{\mathrm{fat}}((M^+)^{\wedge k})$ and show they are proper homotopy invariants. We then apply these results to prove the main theorem.

\begin{dfn}
For a pointed space $W$, the fat diagonal $\Delta^{\mathrm{fat}}(W^{\wedge k})$ is the subset of $W^{\wedge k}$ of points $(w_1,\dots,w_k)$ so that some $w_i=w_j$ for $i \neq j$. For an unpointed space $X$, let $\Delta^{\mathrm{fat}}(X^k)$ denote the analogous subspace of $X^k$.
\end{dfn}

\begin{dfn}
Given the $k$-fold external product $ E^k \rightarrow X^k$ of a fiber bundle $E \rightarrow X$, the bundle  $E^k_{\Delta^{\mathrm{fat}}}$ is the restriction of $E^k$ to $\Delta^{\mathrm{fat}}(X^k)$, i.e. $E^k|_{\Delta^{\mathrm{fat}}(X^k)}$
\end{dfn}

\begin{lem}
For open disk bundles $E_1 \rightarrow X_1,E_2 \rightarrow X_2$ a fiberwise homotopy equivalence $E_1^+ \sim E_2^+$ induces an $S_k$-equivariant homotopy equivalence of pairs $((E^k_1)^+,((E^k_1)_{\Delta^{\mathrm{fat}}})^+) \simeq ((E^k_2)^+,((E^k_2)_{\Delta^{\mathrm{fat}}})^+) $.
\end{lem}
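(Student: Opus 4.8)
The plan is to start from a fiberwise homotopy equivalence $f\colon E_1^+ \to E_2^+$ with fiberwise inverse $g\colon E_2^+ \to E_1^+$, together with fiberwise homotopies $H\colon gf \sim \mathrm{id}$ and $K\colon fg \sim \mathrm{id}$, and to promote all of these, levelwise in the $k$ coordinates, to maps on $k$-fold smash powers. Concretely, for a pointed fiberwise map $\varphi\colon E^+ \to E'^+$ one forms $\varphi^{\wedge k}\colon (E^k)^+ \cong (E^+)^{\wedge k} \to (E'^+)^{\wedge k} \cong (E'^k)^+$; here I would first record the homeomorphism $(E^k)^+ \cong (E^+)^{\wedge k}$ for the external product bundle, so that smashing makes sense. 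The symmetric group $S_k$ acts by permuting the smash factors, and since $\varphi^{\wedge k}$ treats all factors identically, it is $S_k$-equivariant; likewise the homotopies $H^{\wedge k}$ and $K^{\wedge k}$ (formed levelwise in the homotopy parameter, smashing $k$ copies of each time-$t$ map) are $S_k$-equivariant and witness that $f^{\wedge k}$ and $g^{\wedge k}$ are mutually inverse equivariant homotopy equivalences.

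The second step is to check that this equivalence respects the fat diagonals. The key point is that a fiberwise map sends a fiber into a fiber and, being a map of one-point compactifications of bundles, it commutes suitably with the identifications recorded in the definition of $E^k_{\Delta^{\mathrm{fat}}}$. More precisely, $f^{\wedge k}$ sends a point lying over $(x_1,\dots,x_k) \in X_1^k$ to a point lying over $(\pi f(x_1),\dots,\pi f(x_k))$, where $\pi$ is the projection on base spaces induced by $f$ on the (common) base; if $x_i = x_j$ then the images agree, so $f^{\wedge k}$ carries $((E_1^k)_{\Delta^{\mathrm{fat}}})^+$ into $((E_2^k)_{\Delta^{\mathrm{fat}}})^+$. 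The same holds for $g^{\wedge k}$, and the homotopies $H^{\wedge k}, K^{\wedge k}$ preserve the fat-diagonal subspaces at every time $t$ for the same reason (fiberwise homotopies stay fiberwise). Hence $f^{\wedge k}$ and $g^{\wedge k}$ restrict to a map of pairs and a homotopy inverse, with the homotopies respecting the subpairs, giving the asserted $S_k$-equivariant homotopy equivalence of pairs.

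The main obstacle I expect is purely bookkeeping: being careful that "fiberwise" is preserved under the smash power and under forming homotopies levelwise in $t$, and that the one-point compactification functor applied to the external product bundle genuinely yields the smash power, so that all the identifications are compatible. There is also a small point that one should work with based fiberwise maps so that the smash (rather than the product) is the correct target; since $E^+$ is canonically based at $\infty$ and fiberwise maps are by definition pointed, this is automatic. Once these compatibilities are in place, equivariance is immediate from the symmetry of the construction in the $k$ factors, and no further input (such as properness, which was already used upstream to produce $f$) is needed here.
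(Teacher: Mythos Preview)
Your proof is correct and follows the same approach as the paper: identify $(E_i^k)^+ \cong (E_i^+)^{\wedge k}$, take the $k$-fold smash of the given fiberwise equivalence and its homotopies, and observe that a $k$-fold smash of a single fiberwise map automatically carries the fat-diagonal subspace into the fat-diagonal subspace, with $S_k$-equivariance immediate from the symmetry of the construction. The paper's proof is simply the two-line version of what you have written out in detail.
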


\begin{proof}
After observing $(E^k_i)^+ \cong (E^+_i)^{\wedge k}$, this follows directly from the fact that the underlying map $(X_1^+)^{\wedge k} \rightarrow (X_2^+)^{\wedge k}$ preserves the fat diagonal as it is the $k$-fold smash product of a map $X_1^+ \rightarrow X_2^+$.
\end{proof}

\begin{dfn}
The configuration space of $k$ points in $M$, $F(M,k)$, is given by $M^k - \Delta^{\mathrm{fat}}(M^k)$.
\end{dfn}

Recall, $\nu$ denotes the open normal disk bundle of an embedding $M \rightarrow \mathbb{R}^{n+c}$.

\begin{dfn}
$\nu^k_{\mathrm{conf}}$ is the restriction of the external $k$-fold product $\nu^k$ to $F(M,k)$, i.e.  $\nu^k|_{F(M,k)}$.
\end{dfn}

\begin{thm}
There is an equivariant duality pairing

\[F(M,k)_+ \wedge (\nu^k_{\mathrm{conf}})^+ \rightarrow S^{(n+c)k}\]

where the group $S_k$ acts on everything in sight by permuting coordinates.
\end{thm}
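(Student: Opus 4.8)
The plan is to reduce this to the framed duality pairing of Theorem 3.3 applied to the total space of the normal bundle. The key observation is that $\nu$, being an open disk bundle sitting as a codimension-zero open submanifold of $\mathbb{R}^{n+c}$, is a framed $(n+c)$-manifold: restrict the standard framing of $\mathbb{R}^{n+c}$. Moreover this framing is canonical enough that the structural maps involved ($S_k$-permutations of factors, the bundle projection) respect it, which is exactly the situation in which Theorem 3.3 produces an \emph{equivariant} pairing. So my first step is to invoke Theorem 3.3 for the framed manifold $\nu$, which gives a duality pairing $\nu_+ \wedge \nu^+ \to S^{n+c}$; smashing $k$ copies together and using $(\nu^k)^+ \cong (\nu^+)^{\wedge k}$ gives an $S_k$-equivariant duality pairing $(\nu^k)_+ \wedge (\nu^k)^+ \to S^{(n+c)k}$, where $S_k$ permutes the smash factors on both sides and permutes the sphere coordinates on the target.

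Next I would restrict the left factor from $\nu^k$ to $F(M,k)$. Here I need to be slightly careful: $F(M,k)$ is a subspace of $M^k$, and $M$ sits inside $\nu$ as the zero section, so $F(M,k)$ is an $S_k$-invariant subspace of $\nu^k$; restricting the pairing along $F(M,k)_+ \hookrightarrow (\nu^k)_+$ gives a map $F(M,k)_+ \wedge (\nu^k)^+ \to S^{(n+c)k}$. The pairing $\nu_+ \wedge \nu^+ \to S^{n+c}$ sends $(x,y)$ to the value of the inverse framing chart at $x$ evaluated on $y$, which is $\infty$ unless $y$ lies in a small framed neighborhood of $x$; consequently the restricted $k$-fold pairing kills any point of $(\nu^k)^+$ all of whose coordinates lie outside these neighborhoods of the corresponding coordinates of the configuration point. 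The crucial point is that when the configuration point $(x_1,\dots,x_k)$ has distinct (indeed, far-apart, after shrinking the framing) coordinates, the only points of $(\nu^k)^+$ that pair nontrivially with it are those lying over $F(M,k) \subset M^k$ — more precisely, over the open subset $\nu^k_{\mathrm{conf}} = \nu^k|_{F(M,k)}$. So the pairing factors through the quotient $(\nu^k)^+ \to (\nu^k_{\mathrm{conf}})^+$, giving
\[
F(M,k)_+ \wedge (\nu^k_{\mathrm{conf}})^+ \to S^{(n+c)k}.
\]

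It then remains to check this factored pairing is a duality pairing, i.e. both adjoints are stable equivalences. I would do this exactly as in the proof of Theorem 3.3: reduce to field coefficients and the homology Whitehead theorem, and identify the induced homology pairing with a Poincaré–Lefschetz duality pairing. The point is that $\nu^k_{\mathrm{conf}}$ is an open subset of the framed manifold $\nu^k$, and the quotient map realizes $(\nu^k_{\mathrm{conf}})^+$ as $(\nu^k)^+ / \bigl((\nu^k|_{\Delta^{\mathrm{fat}}})^+\bigr)$ up to the zero-section identification; Poincaré duality for the open manifold $\nu^k_{\mathrm{conf}}$ (equivalently, Lefschetz duality for the pair $(\nu^k, \nu^k|_{\Delta^{\mathrm{fat}}})$) says $H_*(F(M,k)_+)$ is dual to $H^*$ of this quotient, with the duality realized by transverse intersection — and transverse intersections are precisely what the framed pairing computes, by the local $\mathbb{R}^i \cap \mathbb{R}^{n+c-i}$ argument already used. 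Equivariance is automatic since every map in sight is built from the $S_k$-equivariant framed pairing by restriction and quotient along $S_k$-maps.

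The main obstacle I anticipate is the bookkeeping in the factorization step: verifying carefully that, after shrinking the loose framing on $\nu$ (using tameness/compactness of $\bar M$ to do this uniformly), a point of $(\nu^k)^+$ pairs nontrivially with a configuration point only if it lies over $F(M,k)$, so that the pairing genuinely descends to $(\nu^k_{\mathrm{conf}})^+$ and not merely to something containing it. Relatedly, one must make sure this shrinking can be done $S_k$-equivariantly, which follows from averaging or from choosing the framing to be pulled back from a single chart structure on $\bar M$. Once the factorization is in place, the duality check is a verbatim repeat of Section 3.
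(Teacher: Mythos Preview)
Your approach is in the right spirit but takes a detour the paper avoids, and the detour has a gap you yourself flag. You propose to build the pairing on $\nu^k$ first and then argue it factors through $(\nu^k_{\mathrm{conf}})^+$ after ``shrinking the loose framing on $\nu$ \dots\ uniformly'' using tameness of $\bar M$. This does not work: points of $F(M,k)$ can have coordinates arbitrarily close together, so no single uniform shrink of the framing on $\nu$ makes the framed chart at every configuration point land inside $\nu^k_{\mathrm{conf}}$. The required shrinking must depend on the configuration point, decreasing to zero as one approaches the fat diagonal. Once you allow pointwise scaling you are no longer factoring a fixed pairing on $\nu^k$; you are building a new loose framing on the open submanifold $\nu^k_{\mathrm{conf}}$.

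This is exactly what the paper does, and it is simpler than your route. The paper observes that $\nu^k_{\mathrm{conf}}$ is itself an open subset of $(\mathbb{R}^{n+c})^k$, constructs an $S_k$-equivariant loose framing on it directly by taking the product framing and scaling each chart by $r/2$, where $r$ is the supremum of scaling factors keeping the chart inside $\nu^k_{\mathrm{conf}}$, and then applies Theorem~3.3 to the framed manifold $\nu^k_{\mathrm{conf}}$ itself. Since $F(M,k)\simeq \nu^k_{\mathrm{conf}}$ equivariantly via the zero section, this gives the stated pairing with no factorization argument and no need to re-verify the duality from scratch: it is a direct invocation of Theorem~3.3. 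Your restriction-and-quotient strategy, once patched with pointwise scaling, collapses to this argument anyway, so you may as well start there.
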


\begin{proof}
We supply $\nu^k_{\mathrm{conf}}$ with a suitably equivariant loose framing which implies the result since $F(M,k) \simeq \nu^k_{\mathrm{conf}}$. Since $\nu$ embeds into $\mathbb{R}^{n+c}$, it is easily endowed with a loose framing $F$ by taking small cubical neighborhoods around each point. Taking products yields an equivariant loose framing $F^k$ on $\nu^k$. We describe how to restrict this loose framing to $\nu^k_{\mathrm{conf}}$. Given a point $x=(p_1,\dots,p_k,v_1,\dots,v_k)$ in $\nu^k_{\mathrm{conf}}$, identify the domain of $F^k_p$ which is $ ([-\infty,\infty]^{n+c})^k$ with $([-1,1]^{n+c})^k$. Let $r$ denote the supremum of $t \in(0,1)$ so that precomposing each factor of $[-1,1]^{n+c}$ by coordinate wise multiplication with $t$ projects to configuration space. Define the loose framing $F$ by precomposing with multiplication by $r/2$.  This loose framing has the property that when we permute the labels of the configuration, it simply permutes the factors of the loose framing. In other words, $F^{-1}_{gx}(gy) =gF^{-1}_x(y)$ where $g \in S_k$, but this is by definition what it means for the duality pairing to be equivariant.

\end{proof}

Let $M_1,M_2$ be proper homotopy equivalent manifolds. Let  $\nu_1:E_1\rightarrow M_1,$ \\$\nu_2:E_2 \rightarrow M_2$ be high and equal rank  open normal disk bundles.

\begin{thm}
There is a Borel equivariant equivalence \[\Sigma^\infty_+ F(M_1,k) \simeq \Sigma^\infty_+ F(M_2,k).\]
\end{thm}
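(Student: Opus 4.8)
The plan is to recognize the statement as a formal consequence of the equivariant duality pairing of the preceding theorem together with the homotopy invariance of the normal open disk bundle from Sections~4 and~5. Write $n$ for the common dimension of $M_1,M_2$ (dimension is a proper homotopy invariant of a manifold, e.g. by compactly supported $\mathbb{Z}/2$-cohomology) and $c$ for the common rank of $\nu_1,\nu_2$. Since the pairing
\[
F(M_i,k)_+ \wedge (\nu^k_{\mathrm{conf}})^+ \rightarrow S^{(n+c)k}
\]
is $S_k$-equivariant, its adjoint
\[
\Sigma^\infty_+ F(M_i,k) \rightarrow F(\Sigma^\infty(\nu^k_{\mathrm{conf}})^+, \Sigma^\infty S^{(n+c)k})
\]
is a map of $S_k$-spectra (with the conjugation action on the target), and by the definition of a duality pairing it is an underlying equivalence, hence a Borel equivalence. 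The target shift $(n+c)k$ is the same for $i=1,2$, so it suffices to produce an $S_k$-equivariant homotopy equivalence between $(\nu^k_{\mathrm{conf}})^+$ formed for $M_1$ and for $M_2$: applying the mapping-spectrum functor $F(\Sigma^\infty(-),\Sigma^\infty S^{(n+c)k})$, which is $S_k$-equivariant and carries underlying equivalences to underlying equivalences, and splicing with the two adjoint equivalences then gives a chain of $S_k$-equivariant underlying equivalences between $\Sigma^\infty_+ F(M_1,k)$ and $\Sigma^\infty_+ F(M_2,k)$, which is the asserted Borel equivalence.

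To produce that equivalence, I would first rewrite $(\nu^k_{\mathrm{conf}})^+$ as a quotient. By definition $\nu^k_{\mathrm{conf}} = \nu^k|_{F(M,k)}$ with $F(M,k) = M^k - \Delta^{\mathrm{fat}}(M^k)$ open in $M^k$, and its closed complement inside $\nu^k$ is $(\nu^k)_{\Delta^{\mathrm{fat}}} = \nu^k|_{\Delta^{\mathrm{fat}}(M^k)}$; one-point compactification of an open subset gives an $S_k$-equivariant homeomorphism
\[
(\nu^k_{\mathrm{conf}})^+ \cong (\nu^k)^+ / ((\nu^k)_{\Delta^{\mathrm{fat}}})^+,
\]
exhibiting $(\nu^k_{\mathrm{conf}})^+$ as the quotient of the pair $((\nu^k)^+, ((\nu^k)_{\Delta^{\mathrm{fat}}})^+)$. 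By the corollary of Section~5 (every proper homotopy equivalence $M_1 \simeq M_2$ induces a fiberwise homotopy equivalence $\nu_1^+ \sim \nu_2^+$) together with the lemma of Section~6 (a fiberwise homotopy equivalence of open disk bundles induces an $S_k$-equivariant homotopy equivalence of the pairs $((E^k)^+, ((E^k)_{\Delta^{\mathrm{fat}}})^+)$), applied with $E_i = \nu_i$, the proper homotopy equivalence $M_1 \simeq M_2$ induces an $S_k$-equivariant homotopy equivalence of pairs $((\nu_1^k)^+, ((\nu_1^k)_{\Delta^{\mathrm{fat}}})^+) \simeq ((\nu_2^k)^+, ((\nu_2^k)_{\Delta^{\mathrm{fat}}})^+)$. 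A homotopy equivalence of pairs, being through maps of pairs, descends to a homotopy equivalence of the quotients, so we obtain the required $S_k$-equivariant homotopy equivalence between $(\nu^k_{\mathrm{conf}})^+$ for $M_1$ and for $M_2$, and the theorem follows.

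I expect essentially all of the genuine content to have been pushed into the earlier sections --- the equivariant duality pairing (via the equivariant loose framing of $\nu^k_{\mathrm{conf}}$), the homotopy invariance of the normal spherical fibration (Wall), and the pairs lemma --- so that this proof is a short diagram chase. The two points that require attention are bookkeeping ones: that the target dimension $(n+c)k$ agrees for the two manifolds, which is exactly why $\nu_1,\nu_2$ are chosen of equal and sufficiently high rank; and that the spectra involved are dualizable, so that the adjunction manipulations and the identification of $\Sigma^\infty_+ F(M,k)$ with a shift of the Spanier-Whitehead dual of $(\nu^k_{\mathrm{conf}})^+$ are legitimate --- this is built into the notion of a duality pairing, and is available because tameness of $M$ makes both $F(M,k)_+$ and $(\nu^k_{\mathrm{conf}})^+$ equivalent to finite complexes. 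If there is any genuine subtlety it is the equivariance, but since every map above is $S_k$-equivariant on the nose and a Borel equivalence is by definition an equivariant map that is an underlying equivalence, no genuine equivariant stable homotopy theory is needed: the theorem is the Spanier-Whitehead dual of the pairs lemma.
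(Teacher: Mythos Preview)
Your proposal is correct and follows essentially the same route as the paper: use the equivariant duality pairing to reduce to producing an $S_k$-equivariant equivalence $(\nu^k_{\mathrm{conf},1})^+ \simeq (\nu^k_{\mathrm{conf},2})^+$, rewrite each as the quotient $(\nu^k_i)^+/((\nu^k_i)_{\Delta^{\mathrm{fat}}})^+$, and then invoke Corollary~5.5 and Lemma~6.3. Your version is more explicit about the bookkeeping (matching dimensions and ranks, dualizability, descent to quotients), but the argument is the same.
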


\begin{proof}
For any $S_k$-spectrum $X$, the equivariant spectrum $F(X,(S^l)^{\wedge k})$, with the conjugation action, is a Borel equivariant homotopy invariant. So by the previous theorem it suffices to show that for $M_1 \simeq M_2$ properly homotopy equivalent, there is an equivariant equivalence $(\nu^k_{\mathrm{conf},1})^+ \simeq (\nu^k_{\mathrm{conf},2})^+$. We observe that $(\nu^k_{\mathrm{conf},i})^+ \cong (\nu^k_i)^+/(\nu^k_{\Delta^{\mathrm{fat}},i})^+$, so the result follows from applying Corollary 4.5 and Lemma 5.3.
\end{proof}

\section{Generalized configuration spaces}
Recently, variants of configuration spaces that allow some points to coincide have been investigated. Petersen and Bokstedt-Minuz have done initial cohomology calculations in what they call ``generalized configuration spaces" \cite{bm} \cite{petersen}, and Hai has given some combinatorial descriptions of ``partial configuration spaces" \cite{hai}. Additionally, Vakil and Wood study unordered versions of generalized configuration spaces in \cite{vw}.

In this section, we adapt our methods to study these generalized configuration spaces.

\begin{dfn}
A diagonal like formula $D$ is a formula in $x_1,\dots,x_k$ consisting of the following three terms with arbitrary parenthesization: equality, and, or. We say it is equivariant with respect to $H \leq S_k$ if any solution $y_1,\dots,y_k$ (meaning an ordered tuple of elements from $\{x_1,\dots,x_k\}$) gives another solution when acted upon by $H$.
\end{dfn}

Here are several examples: the formula $x_1=\dots=x_k$ is a diagonal like formula. This corresponds to the classical diagonal. Similarly, there is the collection of $x_i=x_j$ for $i \neq j$ separated by or's. This corresponds to the fat diagonal. These both are acted upon by $S_k$. Given an ordered partition $k_1 + \dots + k_l=k$, there is a diagonal like formula asserting that some $x_i=x_j$ for $i \neq j$, and $x_i$ and $x_j$ are in different blocks of the partition. This corresponds to the complement of the partial configuration spaces. This formula is acted upon by $S_{k_1} \times \dots \times S_{k_l}$.

From now on, we fix $D$ and the symmetry group $H$.

\begin{dfn}
For a pointed space $W$, the generalized fat diagonal $\Delta^D(W^{\wedge k})$ of $W^{\wedge k}$ is the subspace of points satisfying the formula $D$. For an unpointed space $X$, let $\Delta^D(X^k)$ denote the subspace of $X^k$ satisfying the formula $D$.
\end{dfn}

The fundamental property of $\Delta^D(-)$ is that a map of spaces $X \rightarrow Y$ induces a map of their $\Delta^D(-)$.

\begin{dfn}
Given the $k$-fold external product $ E^k \rightarrow X^k$ of an open disk bundle $E \rightarrow X$, the bundle  $E^k_{\Delta^{D}}$ is the  restriction of $E^k$ to the generalized fat diagonal, i.e. $E^k|_{\Delta^{D}(X^k)}$. 
\end{dfn}

\begin{lem}
For open disk bundles $E_1 \rightarrow X_1,E_2 \rightarrow X_2$ a fiberwise homotopy equivalence $E_1^+ \sim E_2^+$ induces an $H$-equivariant homotopy equivalence of pairs $((E^k_1)^+,((E^k_1)_{\Delta^D})^+) \simeq ((E^k_2)^+,((E^k_2)_{\Delta^D})^+) $.
\end{lem}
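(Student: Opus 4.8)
The plan is to mimic almost verbatim the proof of Lemma~6.3, the only difference being that the fat diagonal $\Delta^{\mathrm{fat}}$ is replaced by the generalized fat diagonal $\Delta^D$ and the symmetry group $S_k$ is replaced by $H$. First I would record the homeomorphism $(E_i^k)^+ \cong (E_i^+)^{\wedge k}$, which identifies the $k$-fold external product of a bundle, one-point compactified, with the $k$-fold smash of its one-point compactification; this is the same observation used in Lemma~6.3 and in the proof of Theorem~6.8. Under this identification the given fiberwise homotopy equivalence $f\colon E_1^+ \sim E_2^+$, with fiberwise homotopy inverse $g$, induces the smash powers $f^{\wedge k}\colon (E_1^+)^{\wedge k} \to (E_2^+)^{\wedge k}$ and $g^{\wedge k}$ in the other direction, and smashing the fiberwise homotopies $fg\simeq \mathrm{id}$, $gf\simeq \mathrm{id}$ gives homotopies $f^{\wedge k} g^{\wedge k}\simeq \mathrm{id}$ and $g^{\wedge k}f^{\wedge k}\simeq \mathrm{id}$.

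Next I would check that $f^{\wedge k}$ carries the subspace $((E_1^k)_{\Delta^D})^+$ into $((E_2^k)_{\Delta^D})^+$, and likewise for $g^{\wedge k}$ and for every stage of the two homotopies. Here is where the ``fundamental property of $\Delta^D(-)$'' stated just before the lemma does the work: the underlying map on base spaces is a $k$-fold smash power $(X_1^+)^{\wedge k}\to (X_2^+)^{\wedge k}$ of a single map $X_1^+\to X_2^+$, and any map of spaces induces a map of generalized fat diagonals (a solution of the formula $D$ maps to a solution, since $D$ is built only from equality, ``and'', ``or'', all of which are preserved under taking images of points). Because $f$ is fiberwise, a point of $(E_1^k)^+$ lying over a point of $\Delta^D(X_1^k)$ is sent to a point of $(E_2^k)^+$ lying over its image, which lies in $\Delta^D(X_2^k)$; hence $f^{\wedge k}$ restricts to $((E_1^k)_{\Delta^D})^+ \to ((E_2^k)_{\Delta^D})^+$. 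The same argument applies to $g^{\wedge k}$ and to each intermediate map of the homotopies, since they too are fiberwise. This yields a homotopy equivalence of pairs $((E_1^k)^+,((E_1^k)_{\Delta^D})^+) \simeq ((E_2^k)^+,((E_2^k)_{\Delta^D})^+)$.

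Finally I would check $H$-equivariance. The group $H\leq S_k$ acts on $(E_i^+)^{\wedge k}$ and on $(X_i^+)^{\wedge k}$ by permuting smash factors, and the maps $f^{\wedge k}$, $g^{\wedge k}$ and the smashed homotopies are manifestly equivariant for this permutation action since they are built coordinatewise from a single map. The one genuine point to verify is that the subspace $((E_i^k)_{\Delta^D})^+$ is $H$-invariant: this is exactly the hypothesis that $D$ is equivariant with respect to $H$, i.e.\ that the solution set of $D$ is preserved by the $H$-action, so $\Delta^D(X_i^k)$ is $H$-stable and hence so is its preimage in $(E_i^k)^+$. Putting these together gives the asserted $H$-equivariant homotopy equivalence of pairs.

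I do not expect a serious obstacle here; the argument is a routine transcription of Lemma~6.3. The only place requiring a moment's care is confirming that the generalized fat diagonal behaves functorially under arbitrary (not necessarily injective) maps of spaces and is $H$-stable, but both facts are precisely what was built into the definitions of ``diagonal like formula'' and ``generalized fat diagonal'' in this section, so they are immediate. One should also make sure the base-level map genuinely is a smash power of a single map $X_1^+\to X_2^+$ — this follows because the fiberwise map $f$ covers a well-defined map on base spaces (fibers go to fibers) and the $k$-fold external product structure is taken coordinatewise — which is the same reasoning already invoked in Lemma~6.3.
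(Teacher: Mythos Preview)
Your proposal is correct and follows essentially the same approach as the paper's own proof, which is a one-sentence transcription of Lemma~6.3 invoking that the underlying map $(X_1^+)^{\wedge k}\to(X_2^+)^{\wedge k}$ is a $k$-fold smash power and hence preserves the generalized fat diagonal. You have simply spelled out in detail the steps (the smash-power construction, preservation of $\Delta^D$, and $H$-equivariance) that the paper leaves implicit.
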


\begin{proof}
As before, this follows directly from the fact that the underlying map \\ $(X_1^+)^{\wedge k} \rightarrow (X_2^+)^{\wedge k}$ preserves the generalized fat diagonal as it is the $k$-fold smash product of a map $X_1^+ \rightarrow X_2^+$.
\end{proof}

\begin{dfn}
For a manifold $M$, the generalized D-configurations of $M$ are $F_D(M,k)=M^k - \Delta^D(M^k)$.
\end{dfn}

As before let $\nu$ denote the normal open disk bundle of an embedding $M \rightarrow \mathbb{R}^{n+c}$.
\begin{dfn}
The bundle $\nu^k_{\mathrm{conf(D)}}$ is the restriction of the $k$-fold external product of the bundle $\nu$ to $F_D(M,k)$, i.e. $\nu^k|_{F_D(M,k)}$.
\end{dfn}

Note that both the subspace $\Delta^D(M^k)$ and $F_D(M,k)$ are $H$-equivariant spaces if $D$ is $H$-equivariant.

\begin{thm}
Suppose $D$ is a diagonal like formula that is $H$-equivariant. Then there is an $H$-equivariant duality pairing,

\[F_D(M,k)_+ \wedge \nu_{\mathrm{conf}(D)}^+ \rightarrow S^{(n+c)k}\]

where the group $H$  acts on everything in sight by permuting coordinates.
\end{thm}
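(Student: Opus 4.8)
The plan is to imitate the proof of the corresponding statement for ordinary configuration spaces (Theorem~6.5), since the only property of the fat diagonal that was used there is functoriality under maps of spaces, which holds verbatim for $\Delta^D$. First I would equip $\nu^k_{\mathrm{conf}(D)}$ with a loose framing and check that it is $H$-equivariant. The key observation is that $F_D(M,k)$ is an open $H$-invariant subspace of $M^k$, so the $H$-equivariant loose framing on $\nu^k$ coming from the product of loose framings on $\nu$ (which exists since $\nu$ embeds in $\mathbb{R}^{n+c}$) can be shrunk fiberwise to land in $\nu^k_{\mathrm{conf}(D)}$: at a point $(p_1,\dots,p_k,v_1,\dots,v_k)$, identify the frame $((\mathbb{R}^{n+c})^k$ with $((-1,1)^{n+c})^k$, take $r$ to be the supremum of $t\in(0,1)$ for which scaling each factor by $t$ keeps the image in $F_D(M,k)$, and precompose with multiplication by $r/2$. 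Because the defining formula $D$ is preserved by the $H$-action and because scaling commutes with permuting the factors, the quantity $r$ is $H$-invariant and the resulting framing $F$ satisfies $F^{-1}_{hx}(hy)=hF^{-1}_x(y)$ for $h\in H$, which is exactly equivariance of the induced pairing.

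Next I would invoke Theorem~3.4: since $\nu^k_{\mathrm{conf}(D)}$ is a smooth manifold with a loose framing of dimension $(n+c)k$, the tautological pairing
\[
F_D(M,k)_+ \wedge \nu_{\mathrm{conf}(D)}^+ \to S^{(n+c)k},\qquad (x,y)\mapsto F^{-1}_x(y)
\]
is a duality pairing. Here I use that the underlying space of $\nu^k_{\mathrm{conf}(D)}$ is homotopy equivalent to $F_D(M,k)$ (the bundle projection is a homotopy equivalence, fiberwise contractibility of disk bundles), so naming the pairing on $F_D(M,k)_+$ versus on $(\nu^k_{\mathrm{conf}(D)})_+$ makes no difference up to equivalence. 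The equivariance of the pairing established in the previous paragraph is precisely the ``equivariant'' clause of the statement, so combining the two gives the conclusion.

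The main obstacle, such as it is, is purely bookkeeping: one must make sure that the shrinking construction is genuinely continuous in $(p_1,\dots,p_k,v_1,\dots,v_k)$ and genuinely $H$-equivariant, and that shrinking a loose framing still yields a loose framing (a section of $\operatorname{Emb}(\mathbb{R}^{n+c},\nu^k_{\mathrm{conf}(D)})\to \nu^k_{\mathrm{conf}(D)}$), so that Theorem~3.4 applies. Continuity of $r$ follows because $F_D(M,k)$ is open and the condition defining it is closed under the scaling, so $r$ is lower semicontinuous and in fact continuous by a standard argument; equivariance follows because $h\in H$ acts by permuting the $k$ factors, which commutes with coordinatewise scaling and preserves membership in $F_D(M,k)$. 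Everything else is formal, so I would keep the write-up to a few sentences, essentially pointing back to the proof of Theorem~6.4 and noting that nothing there used more about the diagonal than its functoriality.
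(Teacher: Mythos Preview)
Your proposal is correct and follows essentially the same approach as the paper: construct an $H$-equivariant loose framing on $\nu^k_{\mathrm{conf}(D)}$ by shrinking the product framing on $\nu^k$ so that it lands over $F_D(M,k)$, then invoke the intrinsic duality pairing together with the homotopy equivalence $\nu^k_{\mathrm{conf}(D)}\simeq F_D(M,k)$. The paper's own proof is even terser---it simply points back to the ordinary configuration space case and says ``scale so it lands in $F_D(M,k)$''---so your additional remarks on continuity and equivariance of the scaling factor $r$ are elaboration rather than a different argument.
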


\begin{proof}
As before, we only need to define an appropriately $H$-equivariant framing on $\nu_{\mathrm{conf}(D)}$. Referring to the proof of theorem 6.6, instead of scaling our framing on $\nu^k$ to project to configuration space, we scale so it lands in $F_D(M,k)$.
\end{proof}

\begin{thm}
The $H$-equivariant spectrum $\Sigma^\infty_+ F_D(M,k)$ is a proper homotopy invariant.
\end{thm}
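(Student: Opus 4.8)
The plan is to run the proof of Theorem 6.7 verbatim, replacing the fat diagonal $\Delta^{\mathrm{fat}}$ by the generalized fat diagonal $\Delta^D$ and the symmetric group $S_k$ by the symmetry group $H$. First I would use the $H$-equivariant duality pairing of Theorem 7.7 to identify $\Sigma^\infty_+ F_D(M,k)$, as an $H$-spectrum, with the Spanier--Whitehead dual of $\nu_{\mathrm{conf}(D)}^+$: the pairing $F_D(M,k)_+ \wedge \nu_{\mathrm{conf}(D)}^+ \to S^{(n+c)k}$ yields an $H$-equivariant equivalence $\Sigma^\infty_+ F_D(M,k) \simeq F\bigl(\Sigma^\infty \nu_{\mathrm{conf}(D)}^+, (S^{n+c})^{\wedge k}\bigr)$, where $H$ acts on the source by conjugation and permutes the smash factors of $(S^{n+c})^{\wedge k}$. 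Since $X \mapsto F(X,(S^l)^{\wedge k})$ with the conjugation action sends Borel $H$-equivalences to Borel $H$-equivalences --- the same input used in the proof of Theorem 6.7 --- it therefore suffices to produce, for proper homotopy equivalent tame smooth manifolds $M_1 \simeq M_2$, an $H$-equivariant equivalence $(\nu^k_{\mathrm{conf}(D),1})^+ \simeq (\nu^k_{\mathrm{conf}(D),2})^+$.

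Next I would rewrite this compactification as a quotient of a Thom space, exactly as in the proof of Theorem 6.7. Since $F_D(M_i,k) = M_i^k - \Delta^D(M_i^k)$ is the complement of a closed subspace, one-point compactification turns the open inclusion into a collapse, giving $(\nu^k_{\mathrm{conf}(D),i})^+ \cong (\nu^k_i)^+/((\nu^k_i)_{\Delta^D})^+$. Choosing closed normal disk bundles $\hat\nu_1,\hat\nu_2$ of sufficiently high and equal rank (Theorem 5.1 produces such bundles, and Wall's uniqueness in Theorem 5.3 lets us match up the ranks), Corollary 5.5 supplies a fiberwise homotopy equivalence $\nu_1^+ \sim \nu_2^+$ covering the given proper homotopy equivalence. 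Feeding this into Lemma 7.4 produces an $H$-equivariant homotopy equivalence of pairs $((\nu^k_1)^+,((\nu^k_1)_{\Delta^D})^+) \simeq ((\nu^k_2)^+,((\nu^k_2)_{\Delta^D})^+)$, and passing to quotients gives the desired equivalence $(\nu^k_{\mathrm{conf}(D),1})^+ \simeq (\nu^k_{\mathrm{conf}(D),2})^+$. Together with the reduction of the first paragraph, this proves the theorem.

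I do not expect a genuine obstacle: Sections 5 and 7 were arranged precisely so that this argument goes through without modification. The only points requiring (minor) care are that the normal disk bundles must be taken of high enough and equal rank for Corollary 5.5 to apply; that the duality pairing of Theorem 7.7 and the pair-equivalence of Lemma 7.4 are honestly $H$-equivariant, which is already built into their statements; and that $X \mapsto F(X,(S^l)^{\wedge k})$ with conjugation action is indeed a Borel $H$-invariant. None of these needs anything beyond what is already established, so the proof is a short assembly of earlier results.
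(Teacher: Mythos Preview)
Your proposal is correct and matches the paper's own proof essentially verbatim: reduce via the $H$-equivariant duality pairing of Theorem~7.7 to showing $(\nu^k_{\mathrm{conf}(D),1})^+ \simeq (\nu^k_{\mathrm{conf}(D),2})^+$, identify this with the quotient $(\nu^k_i)^+/((\nu^k_i)_{\Delta^D})^+$, and then invoke Corollary~5.5 and Lemma~7.4. The paper compresses this to two sentences, but the content is identical.
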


\begin{proof}
As before, it suffices to show that for $M_1 \simeq M_2$ properly homotopy equivalent, there is an equivariant equivalence $(\nu^k_{\mathrm{conf(D)},1})^+ \simeq (\nu^k_{\mathrm{conf(D)},2})^+$. We observe that $(\nu^k_{\mathrm{conf(D)},i})^+ \cong (\nu^k_i)^+/(\nu^k_{\Delta^{D},i})^+$, so the result follows from applying Corollary 4.5 and Lemma 6.4.
\end{proof}

\bibliographystyle{plain}
\bibliography{mybib}{}

\end{document}